\numberwithin{equation}{section}
\newtheorem{thm}{Theorem}[section] 
\newtheorem{myprop}[thm]{Proposition}
\newtheorem{mylem}[thm]{Lemma}
\newtheorem{mycor}[thm]{Corollary}
\def\fidi{\xrightarrow{fidi}}
\def\fidichf{fidi chf }
\def\a{\alpha}
\def\aD{\a_{D}}
\def\aR{\a_{R}}
\def\mD{\mu_{D}}
\def\mR{\mu_{R}}
\def\FD{F_{D}}
\def\FR{F_{R}}
\def\tFD{\bar{F}_{D}}
\def\tFR{\bar{F}_{R}}
\def\l{\lambda}
\def\e{\epsilon}
\def\d{\delta}
\def\z{\zeta}
\def\R{\mathbb{R}}
\def\PRM{\mathbb{PRM}}
\def\Slow{\mathcal{S}}
\def\Moderate{\mathcal{M}}
\def\Fast{\mathcal{F}}
\newcommand{\cprm}[1]{{\buildrel _{\circ} \over #1}}
\begin{document}

\title[Heterogeneous traffic at large time scales]{On the superposition of heterogeneous traffic\\ at large time scales}

\author[L. L\'opez-Oliveros]{Luis L\'opez-Oliveros}
\address{Luis L\'opez-Oliveros\\
Department of Statistical Science\\
Cornell University \\
Ithaca, NY 14853}
\email{ll278@cornell.edu}

\author[S.I.\ Resnick]{Sidney I.\ Resnick}
\address{Prof. Sidney Resnick\\
School of Operations Research and Information Engineering\\
Cornell University \\
Ithaca, NY 14853}
\email{sir1@cornell.edu}

\begin{abstract}
{Various empirical and theoretical studies
indicate that} cumulative network traffic 
is a Gaussian process.
{ However, depending on whether the {intensity} at which
sessions are initiated is large or small relative to the session 
duration tail,}
\citet{mikosch:resnick:rootzen:stegeman:2002} and
\citet{kaj:taqqu:2008} have shown that traffic at large time scales
can be approximated by either fractional Brownian motion (fBm) or
stable L\'evy motion. 
 We study {distributional
properties of cumulative traffic that consists of a finite number
of independent streams and  } 
give an explanation of  why
Gaussian examples abound in practice but not stable L\'evy motion.
We offer an explanation of how much vertical
aggregation is needed for the Gaussian approximation to hold. Our
results are expressed as limit theorems for a sequence of cumulative
traffic processes whose {session initiation {intensities} satisfy growth
  rates similar to those used } in
\citet{mikosch:resnick:rootzen:stegeman:2002}. 
\end{abstract}

\maketitle

\section{Introduction}\label{sec:intro}

Collection of data network measurements often {uses}
 an algorithm for clustering packets with {the} same source and
 destination IP addresses. Various criteria for grouping packets yield
 different entities, e.g. connections, flows (or unidirectional
 connections), end-to-end streams, etc. \citep[See e.g.][Section
 4]{sarvotham:riedi:baraniuk:2005}. {These} high-order
 constructs {of packet clusters}  are sometimes termed
{sessions}. For now, think of a 
 session as a user downloading a file, streaming media, or accessing
 websites. For each session, summary measurements are computed for the 
{{\it size\/} (the number of bytes transmitted in a session)},
the duration
 of the session and the
 average transfer rate. Data sets {of these summaries} show some distinctive properties,
 such as heavy tails for session {size} and duration
 \citep{arlittson:williamson:1996,crovella:bestavros:1997,willinger:paxson:taqqu:1998a}
 and {sometimes} rate \citep{maulik:resnick:rootzen:2002,resnick:2003}. 

 Typically, {a time resolution or granularity is selected or
   imposed.} Typical resolutions are 1, 10 or 100 milliseconds, 1
 second, 1 minute, 1 hour, etc. {Once a resolution is fixed,}
 the number of bytes {or number of packets} per unit time {can be recorded
and cumulative network loads over stationary time intervals computed.}
{These cumulative loads have been
studied from empirical and theoretical perspectives with the objectives}
of  satisfying performance criterion and offering
 adequate bandwidth provisioning \citep{meent:mandjes:2005} or
 predicting properties of congestion events
 \citep{jin:bali:duncan:frost:2007}. 

{Conventional wisdom based on empirical studies claims}
 that a heavily loaded network link subject to 
 aggregation over many users  should see  Gaussian traffic. {This wisdom is
 considered a network {\it invariant\/}.}
Influential examples based on the Bellcore measurements
 \citep{leland:taqqu:willinger:wilson:1994} suggest that
 \emph{horizontal} aggregation, that is, working with a single on/off stream 
at sufficiently
 large time scale {justifies} Gaussian modeling. See also
 \citet{kurtz:1996} and \citet{willinger:taqqu:sherman:wilson:1997}.

However, mathematically {it is known that}
 with heavy tailed session durations, cumulative
{load} at large time scales can be approximated by either fractional
Brownian motion (fBm) or stable L\'evy motion, depending on whether
the {intensity} at which sessions are initiated is large or small relative to
the size of the duration tails. See \cite{
mikosch:resnick:rootzen:stegeman:2002,
kaj:taqqu:2008,
taqqu:willinger:sherman:1997}. {The stable approximation has not been
observed empirically}
\citep{guerin:nyberg:perrin:resnick:rootzen:starica:2003}
and  use of Gaussian cumulative loads has become {dominant}
\citep{kilpi:norros:2002,sarvotham:wang:riedi:baraniuk:2002,jain:dovrolis:2005}. 

{But why should traffic be Gaussian?}  {According to the empirical
study \citet{meent:mandjes:pras:2006}, in addition to horizontal
aggregation, the superposition of independent traffic streams, that
is, \emph{vertical} aggregation, can justify a Gaussian model and, in
fact, the number of traffic streams need not be large to make
cumulative loads approximately Gaussian.}

In this paper we
\begin{itemize}
\item study the distribution of the cumulative load in the presence of
  a finite number of independent traffic streams;
\item give an explanation for  why Gaussian examples abound in
  practice but not stable ones;
\item answer how much vertical aggregation is needed to justify the
  use of fBm. 
\end{itemize}
 
Our findings suggest that cumulative load for  aggregate traffic can be
approximated by fBm at large time
  scales {provided} the initiation {intensity} of at least one of the traffic
  components is large. 
Network traffic in the wild has several distinct constituents and 
we claim that in practice there is one or more
 components with dominant large initiation {intensities}. For example, this
 should be the case with  web traffic using port 80
and this suggests why  Gaussian traffic should be
pervasive \citep{meent:mandjes:pras:2006}. 

{Before discussing mathematical details,} we
{illustrate the phenomena of interest} with a {motivating}
 example of a network trace captured
at Cornell University main campus servers during 55 days between
November 2, 2009, and January 15, 2010. Cornell's data set is a
collection of \emph{netflow} records, where all non-IP traffic has
been discarded and only TCP and UPD traffic is present in the trace. A
netflow is a collection of packets with the same source and
destination IP addresses, source and destination ports, protocol,
ingress interface and IP type of service \citep{cisco:2007}. In our
data, TCP traffic accounts for nearly 90\% of the bytes, and over 80\%
of the total number of netflows, mostly port 80 (http traffic)
netflows. We have taken the part of the trace corresponding to both
outgoing and incoming traffic between 1 and 5 p.m. local time, adding
up to 220 hours of traffic. {The anonymization procedure used on the
data obliterated the distinction between outgoing and incoming
flows.}

We analyze the distribution of $A^{(TCP)}$ and $A^{(UDP)}$, namely the cumulative load generated by TCP and UDP bytes, respectively. For this purpose, we separate the trace into TCP and UDP netflows and for $k=1,\ldots,220$ we count
\begin{align*}
A_{k}^{(TCP)}&:=\textrm{total number of TCP bytes captured in the $k$th hour},\\
A_{k}^{(UDP)}&:=\textrm{total number of UDP bytes captured in the $k$th hour}.
\end{align*}
Due to the dates and times of collection, these counts exhibit both a trend and a daily seasonality. Here we detrend and remove daily seasonality \citep[see e.g.][Section 1.4]{brockwell:davis:1991}, but our conclusions are the same without this massage.

Figure \ref{fig:qqnormtcpudp} shows Gaussian QQ plots for $A^{(TCP)}$ (\textit{left}) and $A^{(UDP)}$ (\textit{right}). A straight line fit is evident for the TCP cumulative input. However, the UDP counterpart shows a significant departure from the straight line. Using the $p$-values of the Anderson-Darling two-sided test also shows no evidence against the normality of $A^{(TCP)}$ ($p=0.1369$), but strong evidence against a Gaussian model for $A^{(UDP)}$ ($p=9.8\times10^{-16}$).

\begin{figure}[htb]
\centering
\includegraphics{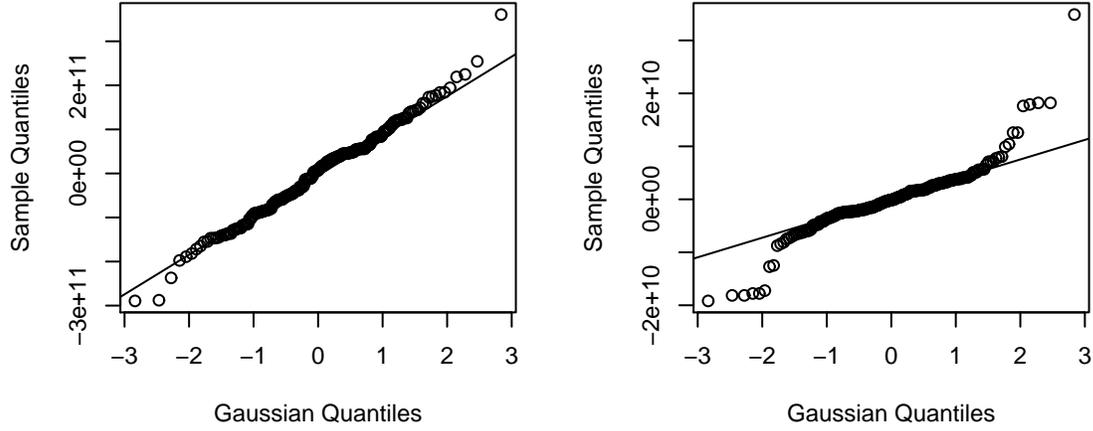}
\caption{Normal QQ plots of cumulative inputs. \textit{Left}: TCP traffic. \textit{Right}: UDP traffic}\label{fig:qqnormtcpudp} 
\end{figure}

We also check whether $A^{(UDP)}$ is a heavy-tailed random variable,
in the sense of its distribution tail being regularly varying with
tail index $\a$ \citep{dehaan:ferreira:2006, resnickbook:2007}. For
instance, Figure \ref{fig:plotudp} \textit{left} shows a stable regime
in the Hill plot of $\a$ \citep[for Hill plots, see
e.g.][]{hill:1975,dehaan:resnick:1998,
  resnickbook:2007}. Additionally, in Figure \ref{fig:plotudp}
\textit{right} we present the exponential QQ plot of $\log(A^{(UDP)})$
with a straight line fit through the biggest 55 observations. This
shows no evidence against approximating the distribution 
of  {thresholded} values 
of $A^{(UDP)}$ by a Pareto  (Recall that the
logarithm of Pareto random variable is exponential; see, for example,
\citet{mcneil:frey:embrechts:2005,resnickbook:2007, coles:2001}.) 

\begin{figure}[htb]
\centering
\includegraphics{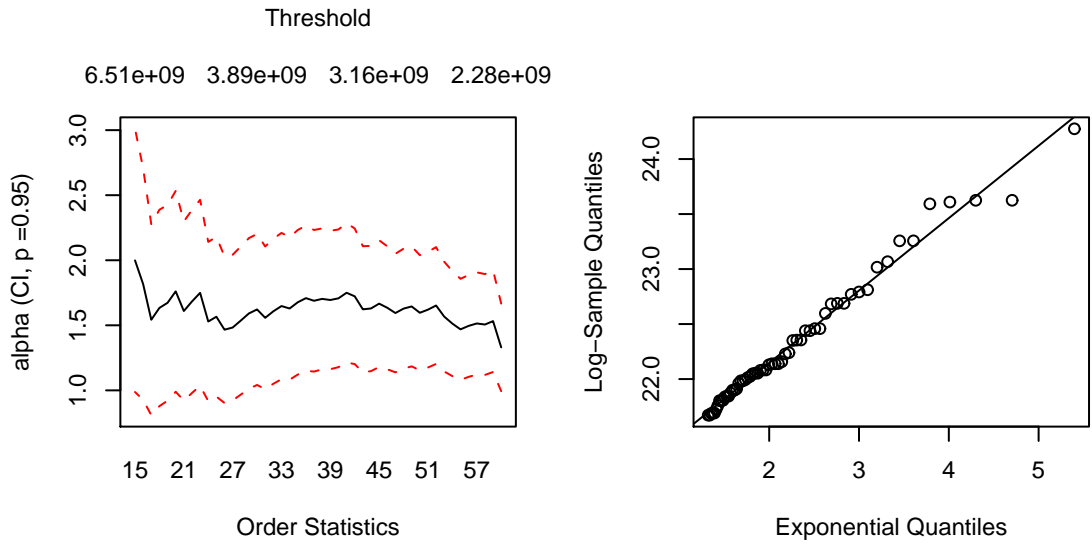}
\caption{Plots for the UDP cumulative input. \textit{Left:} Hill plot of tail index with 95\% confidence interval. \textit{Right:} Exponential QQ plot of the log data.}\label{fig:plotudp}
\end{figure}

{If} we consider the aggregated cumulative load,
 $A^{(TCP)}+A^{(UDP)}$, the normal QQ plot in Figure
\ref{fig:qqnorma} exhibits a straight line fit {and the}
Anderson-Darling test $p-$value is $0.2117$, showing no evidence to
reject normality. Without accounting for centering and scaling, this
result is rather counterintuitive due to the nature of the individual
tails of $A^{(TCP)}$ and $A^{(UDP)}$. 

\begin{figure}[htb]
\centering
\includegraphics{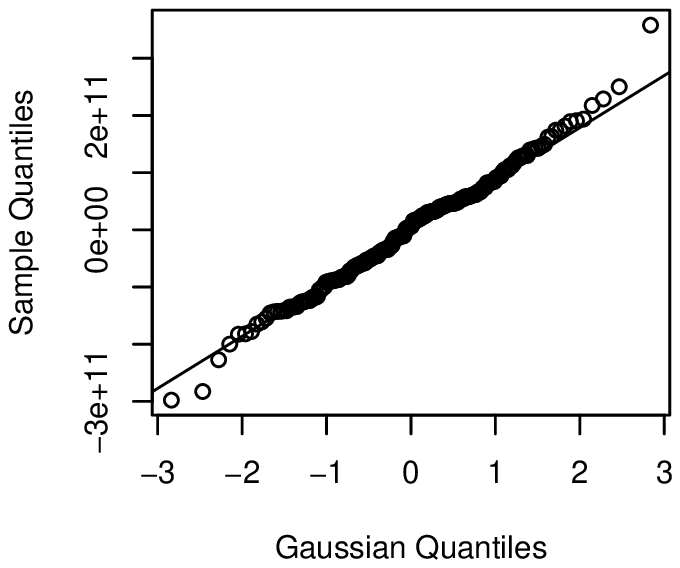}
\caption{Normal QQ plot of the aggregated cumulative input.}\label{fig:qqnorma}
\end{figure}

Our explanation to the above phenomenon starts by modeling the
quantity of data in windows of length $T$ in Section
\ref{sec:setup}. Analogously to the slow and fast growths of
\cite{mikosch:resnick:rootzen:stegeman:2002}, we define two different
scenarios for the aggregated traffic.
{A} third scenario is defined similarly to the boundary case considered
in \citet{kaj:taqqu:2008}. In Section \ref{sec:main} we obtain
approximations and provide clarification of the asymptotic
behavior at large time scales. {We let} $T\to\infty$ and see what
limits exist for the aggregated cumulative {load}. In Section
\ref{sec:extensions} we study  extensions to our model and
finally Section \ref{sec:techproofs} contains some technical results
used to prove our main theorems. 

\subsection{Notation}\label{sec:not}

In order to simplify the later presentation, we introduce and collect
some notation. References are provided for further reading.

\begin{tabular}{l l}
\noalign{\smallskip}\\
$\bar{F}$ & The right tail of the distribution function $F$, i.e. $\bar{F}=1-F$.\\\noalign{\smallskip}
$F^{\leftarrow}$ & The left continuous inverse of the distribution
function $F$,
\\\noalign{\smallskip} &
 i.e. $F^{\leftarrow}(y)=\inf\{x:F(x)\geq y\}$.\\\noalign{\smallskip}
$f_{1}\sim f_{2}$ & $\lim_{x\to\infty} f_{1}(x)/f_{2}(x) = 1$.\\\noalign{\smallskip}
$\fidi$ & Convergence of finite dimensional distributions.\\\noalign{\smallskip}
$\xrightarrow{v}$ & Vague convergence of measures. See
e.g. \citet{kallenberg:1984,
resnick:1987}.\\\noalign{\smallskip}
$M_{+}(0,\infty]$ & The space of nonnegative Radon measures on $(0,\infty]$.\\\noalign{\smallskip}
$RV_{\gamma}$ & The class of regularly varying functions with index $\gamma$.\\\noalign{\smallskip}
 & See e.g. \citet{bingham:goldie:teugels:1987,dehaan:ferreira:2006,resnick:1986}.\\\noalign{\smallskip}
$\xi=\PRM(E\xi)$ & A Poisson random measure $\xi$ with mean measure $E\xi$.\\\noalign{\smallskip}
$\cprm{\xi}$ & A compensated Poisson random measure with mean measure $E\xi$, i.e. $\cprm{\xi}=\xi-E\xi$.\\\noalign{\smallskip}
$N^{\infty}_{\gamma,h_{\d}}$ &
$N^{\infty}_{\gamma,h_{\d}}=\PRM(ds\cdot\gamma u^{-(\gamma+1)}du\cdot
h_{\d}(dr))$ on $\R\times(0,\infty)^{2}$, where $h_{\d}$ is a measure
on $(0,\infty)$
\\\noalign{\smallskip} 
 & such that $h_{\d}[\cdot,\infty)\in RV_{-\d}$. If $h_{\d}(dr)=\d
 r^{-(\d+1)}dr$, we may simply write
 $N^{\infty}_{\gamma,\d}$.\\\noalign{\smallskip} 
$N^{\infty}_{\gamma,\d}$ & $N^{\infty}_{\gamma,\d}=\PRM(ds\cdot \gamma u^{-(\gamma+1)}du\cdot\d r^{-(\d+1)}dr)$ on $\R\times(0,\infty)^{2}$.\\\noalign{\smallskip} 
$M_{\gamma,m}(dv)$ & A $\gamma-$stable random measure with control measure $m(dv)$ and stable index $1<\gamma<2$.\\\noalign{\smallskip}
 & For $\xi=\PRM(m(dv)w^{-(\rho+1)}dw)$, we can write\\\noalign{\smallskip}
  & $M_{\gamma,m}(A)\stackrel{d}{=}\left(\left(-\cos\frac{\pi\gamma}{2}\right)\frac{2\Gamma(2-\gamma)}{\gamma(\gamma-1)}\right)^{-1/\gamma}\int_{A}\int_{w=0}^{\infty}w\cprm{\xi}(dv,dw)$.\\\noalign{\smallskip}
 & See e.g. \citet[][Chapter 3]{samorodnitsky:taqqu:1994}.\\\noalign{\smallskip}
$\Lambda_{\gamma}(\cdot)$ & A $\gamma-$stable L\'evy motion totally skewed to the right with stable index $1<\gamma<2$. \\\noalign{\smallskip}
 & In general, we can write $\Lambda_{\gamma}(t)\stackrel{d}{=}\left(\left(-\cos\frac{\pi\gamma}{2}\right)\frac{2\Gamma(2-\gamma)}{\gamma(\gamma-1)}\right)^{1/\gamma}\int_{0}^{\infty}1_{\{0<v<t\}}M_{\rho,m}(dv)$. \\\noalign{\smallskip}
  & See e.g. \citet[][Chapter 3]{samorodnitsky:taqqu:1994}.\\\noalign{\smallskip}
$B_{H}(\cdot)$ & The standard fractional Brownian motion with Hurst exponent $H$.
\end{tabular}

\section{Model Description and Basic Assumptions}\label{sec:setup}

Consider a network that has an infinite number of nodes. At certain
times, a node begins a transmission session at a random rate {that is}
fixed throughout the session. Suppose network traffic consists of $p$
distinct types which we call \textit{streams}. In practice, such a
division of network traffic arises naturally; e.g. traffic can be
segmented by application type (web, email, streaming media,
file-sharing applications, etc.), by protocol (TCP, UDP, IMTP, etc.),
and even by users.  {We suppose the $p$ streams are independent
and that each follows an
$M/G/\infty$ input model. The overall load is obtained by  aggregating
over the $p$ streams.}
 Thus, the basic assumptions
are as follows: 

\begin{itemize}
\item Sessions corresponding to the $j$th stream are initiated at homogenous Poisson time points \linebreak $\{\Gamma_{k}^{(j)},-\infty<k<\infty\}$ with arrival intensity $\l^{(j)}>0$. These points are labeled so that $\Gamma_{0}^{(j)}<0<\Gamma_{1}^{(j)}$ whence $\{-\Gamma_{0}^{(j)},\Gamma_{1}^{(j)},(\Gamma_{k+1}^{(j)}-\Gamma_{k}^{(j)},k\not=0)\}$ are iid exponential with parameter $\l^{(j)}$. Thus, we have:
\begin{equation*}
\sum_{k}\e_{\Gamma_{k}^{(j)}}=\PRM(\l^{(j)}ds).
\end{equation*}
We assume that these $\PRM$s are independent.
\item All the sessions in the network transmit data at positive random
  rates that are iid {with common distribution} $\FR$. Let
  $\{R_{k}^{(j)}\}$ be the rate of the $k$th session of the $j$th
  stream. {Assume that} either $\tFR\in RV_{-\aR}, 1<\aR<2$, or $E[(
  R_{1}^{(1)} )^{2}]<\infty$. In either case, define
  $\mR:=ER_{1}^{(1)}$. 
\item {S}essions {in} the $j$th stream have positive durations
  $\{D_{k}^{(j)}\}$, $j=1,\ldots,p,$ that are iid $\FD^{(j)}$, with
  $\tFD^{(j)}\in RV_{-\aD^{(j)}}$, $1<\aD^{(j)}<2$, and
  $\mD^{(j)}:={ED^{(j)}_{1} }$. In general, not all the $\aD^{(j)}$s are
  equal. 
\item We also assume mutually independent durations across streams, and that durations and rates are independent.
\end{itemize}
There is empirical evidence justifying the choices of $\aD^{(j)}$s and
$\aR$: See
e.g. \citet{cunha:bestavros:crovella:1995,willinger:taqqu:leland:wilson:1995,leland:taqqu:willinger:wilson:1994,resnick:2003,lopez-oliveros:resnick:2009}. For
now, we adopt a network-centric approach by assuming the rate of
communication entirely depends on the state and speed of the
network. {Studies supporting } this assumption include
\citet{shakkottai:brownlee:claffy:2005} and
\citet{kortebi:muscariello:oueslati:roberts:2005}. 

We will need
\begin{equation}\label{eq:agl}
\l=\sum_{j=1}^{p}\l^{(j)},
\end{equation}
\begin{equation}\label{eq:fmix}
\FD:=\sum_{j=1}^{p} (\l^{(j)}/\l)\FD^{(j)},
\end{equation}
and the quantile functions
\begin{align}
b_{D}^{(j)}(t)&=(1/\tFD^{(j)})^{\leftarrow}(t)=(\FD^{(j)})^{\leftarrow}(1-1/t),\label{eq:bj}\\
b_{D}(t)&=(1/\tFD)^{\leftarrow}(t)=\FD^{\leftarrow}(1-1/t),\label{eq:bmix}\\
b_{R}(t)&=(1/\tFR)^{\leftarrow}(t)=\FR^{\leftarrow}(1-1/t).\label{eq:br}
\end{align}
Notice that $\FD$ is the mixture model of the durations of the $p$
streams, with weights $\l^{(j)}/\l$, $j=1,\ldots,p$. In fact, $\FD$
{is} the distribution of the duration of the sessions of the
aggregated stream, and $\l^{(j)}/\l$ {is} the proportion of the traffic
that consists of sessions from the $j$th stream. We {return to}
this interpretation later. 

Now consider {$(s,u,r)$ as a  generic Poisson point}
representing a session that starts at time $s$, {has} duration $u$ and
rate $r$. By augmentation, the counting function of the session {descriptors}
$(\Gamma_{k}^{(j)},D_{k}^{(j)},R_{k}^{(j)})$ of the $j$th stream on
$\R\times[0,\infty)^{2}$ is 
\begin{equation}\label{eq:countj}
N^{(j)}:=\sum_{k}\e_{(\Gamma_{k}^{(j)},D_{k}^{(j)},R_{k}^{(j)})}=\PRM(\l^{(j)}ds \FD^{(j)}(du)\FR(dr)),\quad j=1,\ldots,p.
\end{equation}
By independence, the counting function of the session descriptors
 of the aggregated stream is
\begin{align}\label{eq:agcount}
N:=\sum_{j=1}^{p}{N^{(j)}}&=\PRM\left(\l ds \sum_{j=1}^{p} (\l^{(j)}/\l)\FD^{(j)}(du)\FR(dr) \right)\notag\\
&=\PRM(\l ds \FD(du)\FR(dr) ).
\end{align}
Thus, the mean measures of the $N^{(j)}$ and $N$ are given by
\begin{align*}
EN^{(j)}(ds,du,dr)&:=\l^{(j)}ds F_{D}^{(j)}(du)F_{R}(dr),\quad j=1,\ldots,p,\\
EN(ds,du,dr)&:=\l dsF_{D}(du)F_{R}(dr).
\end{align*}

In addition, let
\begin{equation}\label{eq:length}
L_{t}(s,u)=\left|[0,t]\cap[s,s+u]\right|=\int_{0}^{t}1_{[s,s+u]}(y)dy=\int_{0}^{u}1_{[0,t]}(y+s)dy,
\end{equation}
be the length of the subinterval of $[0,t]$ during which the session
$(s,u,r)$ transmits data. {In Lemma
\ref{lem:lengthprop}, we  summarize several required properties of $L_{t}(s,u)$.}

For each $j$, define
\begin{align}
A^{(j)}(t)&:=\textrm{cumulative input {in $[0,t]$ from} the $j$th stream}\notag\\
&=\int_{-\infty}^{\infty}\int_{0}^{\infty}\int_{0}^{\infty}rL_{t}(s,u)N^{(j)}(ds,du,dr),\label{eq:inpj}\\
\intertext{and similarly}
A(t)&:=\textrm{cumulative input {in $[0,t]$} from the aggregated stream}\notag\\
&=\int_{-\infty}^{\infty}\int_{0}^{\infty}\int_{0}^{\infty}rL_{t}(s,u)N(ds,du,dr).\label{eq:aginp}
\end{align}
{(\citet{kaj:taqqu:2008} showed that these integrals are well defined 
using Campbell's theorem \citep[Section 3.2]{kingman:1993}.)}
{Also,}
\begin{equation*}
EA^{(j)}(t)=\l^{(j)}\mD^{(j)}\mR t, \quad
EA(t)=\sum_{j=1}^{p} \l^{(j)}\mD^{(j)}\mR t=\l\mD \mR t,
\end{equation*}
where
\begin{equation}\label{eq:mdmix}
\mD:=\sum_{j=1}^{p} (\l^{(j)}/\l)\mD^{(j)}
\end{equation}
is the mean of the mixture model of the durations of the different streams.

Observe that we can write the cumulative inputs {as}
linear drift plus {compensated} random Poisson fluctuation as follows: 
\begin{align}
A^{(j)}(t)&:=\l^{(j)}\mD^{(j)}\mR t+\int_{-\infty}^{\infty}\int_{0}^{\infty}\int_{0}^{\infty}rL_{t}(s,u)\cprm{N}^{(j)}(ds,du,dr),\label{eq:cinpj}\\
A(t)&:=\l\mD \mR t+\int_{-\infty}^{\infty}\int_{0}^{\infty}\int_{0}^{\infty}rL_{t}(s,u)\cprm{N}(ds,du,dr).\label{eq:caginp}
\end{align}

{After scaling time by $T$,}  we think of $A_{T}^{(j)}{:}=(A^{(j)}(Tt),t>0),
j=1,\ldots,p$ and $A_{T}{:}=(A(Tt),t>0)$ for large $T$, as the cumulative
inputs on large time scales. Thus, we consider a family of models
indexed by the  time scale parameter $T$ {and from now on }
 we let the
arrival intensities depend on $T$ {so that}
 $\l^{(j)}:=\l^{(j)}(T)$. If necessary, we let
$\l_{j}(T)\to\infty$ as $T\to\infty$ (see
\eqref{eq:lgrowth}). {Dependence of the arrival intensities on $T$ means}
 $\l$, $\FD$, $b_{D}^{(j)}$ and $b_{D}$ as defined in
\eqref{eq:agl}-\eqref{eq:bmix} depend on $T$ as well; however, notice
that the tail indices of the distribution of the duration, namely
$\aD^{(j)}$, remain independent of $T$. In practice, the fact that we
focus on the stream at a particular time period, say $[0,Tt]$, does
not affect the tail index of the distribution of the sessions
duration, which is in accordance with our assumptions. For
convenience, we {often} suppress the sub{script} $T$.

{Fix $j$, $1\leq j\leq p$} and {in the $T$th model,}
let $A_{cs}^{(j)}(t)$ be the centered and scaled cumulative input of
the $j$th stream in $[0,Tt]$, that is 
\begin{equation}\label{eq:csinpj}
A_{cs}^{(j)}(t):=\frac{A^{(j)}(Tt)-\l^{(j)}\mD^{(j)}\mR Tt}{a^{(j)}(T)},
\end{equation}
for a suitable $a_{j}(T)$ to be made precise below.
{Assuming $
\lim_{T\to\infty}\l^{(j)}T\tFD^{(j)}(T)$ exists,}
the asymptotic behavior of $A_{cs}^{(j)}(t)$ as $T\to\infty$, depends
on whether the arrival rate  is large, moderate, or small,
relative to the tail  of the duration.

\begin{thm}\label{thm:mikosch}\citep{mikosch:resnick:rootzen:stegeman:2002,kaj:taqqu:2008}.

For any {$1\leq j\leq p$}, 
consider the following three growth regimes of the arrival rate:
\begin{equation}\label{eq:lgrowth}
\lim_{T\to\infty}\l^{(j)}T\tFD^{(j)}(T)=\begin{cases}
\infty,&\textrm{fast-growth}.\\
c_{j}^{\aD^{(j)}-1},&\textrm{moderate-growth},\\
0,&\textrm{slow-growth},
\end{cases}
\end{equation}
where $c_{j}\in(0,\infty)$. (The form of the moderate-growth limit 
{facilitates}  a simple expression of the corresponding limit process.)
{A}ssume that either $\tFR\in RV_{-\aR}, \aR>\aD^{(j)}$ or $E[(
R_{1}^{(1)} )^{2}]<\infty$. (If $\aR\leq\aD^{(j)}$, {the limit process is the same for all three growth regimes} and the
distinction 
among the growth regimes is {irrelevant} \citep[Theorem 4]{kaj:taqqu:2008}.)

\begin{enumerate}[(a)]

\item Under fast-growth, we distinguish two subcases:

\begin{enumerate}[(i)]
\item If $E[( R_{1}^{(1)} )^{2}]<\infty$,
\begin{equation*}\label{eq:limfgrowth1}
A_{cs}^{(j)}(\cdot)\fidi E[( R_{1}^{(1)}
)^{2}]^{1/2}\sigma_{B_{H^{(j)}}(1)}^{(j)} B_{H^{(j)}}(\cdot),\quad
T\to\infty, 
\end{equation*}
where
\begin{equation*}\label{eq:eq:afgrowth}
a^{(j)}(T)=[\l^{(j)}T^{3}\tFD^{(j)}(T)]^{1/2},
\end{equation*}
\begin{equation*}
\sigma_{B_{H^{(j)}}(1)}^{(j)}=\frac{2}{(\aD^{(j)}-1)(2-\aD^{(j)})(3-\aD^{(j)})},
\end{equation*}
and $B_{H^{(j)}}$ is a fractional Brownian motion with Hurst exponent
\begin{equation*}
H^{(j)}=(3-\aD^{(j)})/2\in(1/2,1).
\end{equation*}

\item If $\tFR\in RV_{-\aR}, 1<{\alpha_D^{(j)}<} \aR<2$, then
\begin{equation*}
A_{cs}^{(j)}(\cdot)\fidi Z_{\aD^{(j)},\aR}(\cdot),\quad T\to\infty,
\end{equation*}
where
\begin{equation*}
a^{(j)}(T)=Tb_{R}(\l T\tFD^{(j)}(T)),
\end{equation*}
\begin{align*}
Z_{\aD^{(j)}, {\alpha_R}}(t)&=\int_{-\infty}^{\infty}\int_{0}^{\infty}\int_{0}^{\infty}rL_{t}(s,u)\cprm{N^{\infty}_{\aD^{(j)},\aR}}(ds,du),\notag\\
&\stackrel{d}{=}\left(\left(-\cos\frac{\pi\aD^{(j)}}{2}\right)\frac{2\Gamma(2-\aD^{(j)})}{\aD^{(j)}(\aD^{(j)}-1)}\right)^{1/\aD^{(j)}}\int_{-\infty}^{\infty}\int_{0}^{\infty}L_{t}(s,u)M_{\aR,m}(ds,du),\label{eq:limfgrowth2}
\end{align*}
and $M_{\aR,m}(ds,du)$ is a $\aR$-stable random measure with control measure
\begin{equation*}
m(ds,du)=ds \cdot\aD^{(j)} u^{-(\aD^{(j)}+1)}du.
\end{equation*}
Thus, the process $Z_{\aD^{(j)},\aR}(t)$ is $\aR$-stable and $H^{(j)}$-similar with
\begin{equation*}
H^{(j)}=(\aR+1-\aD^{(j)})/\aR\in(1/\aR,1).
\end{equation*}

\end{enumerate}

\item Under moderate-growth
\begin{equation*}\label{eq:mgrowthlimit}
A_{cs}^{(j)}(\cdot)\fidi c_{j}Y_{\aD^{(j)}}(\cdot/c_{j}),\quad T\to\infty,
\end{equation*}
where
\begin{equation*}
a^{(j)}(T)=T,
\end{equation*}
and
\begin{equation*}
Y_{\aD^{(j)}}(t)=\int_{-\infty}^{\infty}\int_{0}^{\infty}\int_{0}^{\infty}rL_{t}(s,u)\cprm{N^{\infty}_{\aD^{(j)},\FR}}(ds,du,dr).
\end{equation*}

\item Under slow-growth
\begin{equation*}
A_{cs}^{(j)}(\cdot)\fidi E[( R_{1}^{(1)} )^{\aD^{(j)}}]^{1/\aD^{(j)}}\Lambda_{\aD^{(j)}}(\cdot),\quad T\to\infty,
\end{equation*}
where
\begin{equation*}
a^{(j)}(T)=b_{D}^{(j)}(\l^{(j)}T),
\end{equation*}
$\Lambda_{\aD^{(j)}}$ is an $\aD^{(j)}$-stable L\'evy motion totally skewed to the right, which we can write as
\begin{align*}
E[( R_{1}^{(1)} )^{\aD^{(j)}}]^{1/\aD^{(j)}}\Lambda_{\aD^{(j)}}(t)&=\int_{-\infty}^{\infty}\int_{0}^{\infty}\int_{0}^{\infty}ur1_{\{0<s<t\}}\cprm{N^{\infty}_{\aD^{(j)},\FR}}(ds,du,dr)\\
&\stackrel{d}{=}\left(\left(-\cos\frac{\pi\aD^{(j)}}{2}\right)\frac{2\Gamma(2-\aD^{(j)})}{\aD^{(j)}(\aD^{(j)}-1)}\right)^{1/\aD^{(j)}}\int_{-\infty}^{\infty}\int_{0}^{\infty}r1_{\{0<s<t\}}M_{\aD^{(j)},m}(ds,dr),
\end{align*}
and $M_{\aD^{(j)},m}(ds,dr)$ is an $\aD^{(j)}$-stable random measure with control measure 
\begin{equation*}
m(ds,dr)=ds\FR(dr).
\end{equation*}
\end{enumerate}
\end{thm}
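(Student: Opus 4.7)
The plan is to apply the standard characteristic function technique that underlies both \citet{mikosch:resnick:rootzen:stegeman:2002} and \citet{kaj:taqqu:2008}. The starting point is the compensated Poisson integral representation \eqref{eq:cinpj}: for nonnegative $f$ on $\R\times(0,\infty)^{2}$,
\begin{equation*}
E\exp\Bigl(i\int f\,d\cprm{N^{(j)}}\Bigr)=\exp\Bigl(\int(e^{if(s,u,r)}-1-if(s,u,r))\,\l^{(j)}\,ds\,\FD^{(j)}(du)\,\FR(dr)\Bigr).
\end{equation*}
Fix $m\geq 1$, $0<t_{1}<\cdots<t_{m}$ and $\th_{1},\ldots,\th_{m}\in\R$; with $f(s,u,r)=r\sum_{k}\th_{k}L_{Tt_{k}}(s,u)/a^{(j)}(T)$ one gets a closed form for the joint characteristic function of $(A_{cs}^{(j)}(t_{k}))_{k}$, and the whole analysis reduces to the limit of the exponent.

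The second step is to rescale $s\mapsto Ts$, $u\mapsto Tu$. By the homogeneity $L_{Tt}(Ts,Tu)=TL_{t}(s,u)$ (a property I would first extract from Lemma \ref{lem:lengthprop}), the mean measure becomes $\l^{(j)}T\cdot T\tFD^{(j)}(T)\,ds\cdot[T\tFD^{(j)}(T)]^{-1}\FD^{(j)}(T\,du)\cdot\FR(dr)$. Since $\tFD^{(j)}\in RV_{-\aD^{(j)}}$, the normalized measure $[T\tFD^{(j)}(T)]^{-1}\FD^{(j)}(T\,\cdot)$ converges vaguely on $(0,\infty]$ to $\aD^{(j)}u^{-(\aD^{(j)}+1)}du$, while the prefactor $\l^{(j)}T\cdot T\tFD^{(j)}(T)$ is precisely what the three regimes in \eqref{eq:lgrowth} control. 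In the fast heavy-tailed subcase I would additionally substitute $r\mapsto b_{R}(\l^{(j)}T\tFD^{(j)}(T))r$ and invoke $\tFR\in RV_{-\aR}$ to produce vague convergence of the rate tail to $\aR w^{-(\aR+1)}dw$; in the slow-growth case I would instead substitute $u\mapsto b_{D}^{(j)}(\l^{(j)}T)u$ and use $L_{Tt}(Ts,b_{D}^{(j)}(\l^{(j)}T)u)/b_{D}^{(j)}(\l^{(j)}T)\to u\mathbf{1}_{\{0<s<t\}}$, which follows from $b_{D}^{(j)}(\l^{(j)}T)/T\to 0$.

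With the rescaled mean measure in hand, the asymptotics of $\psi(x):=e^{ix}-1-ix$ select among the three limits. Under fast growth with $ER_{1}^{2}<\infty$ the scaled integrand is uniformly small, so $\psi(x)\sim -x^{2}/2$; interchanging the rate integration via Fubini delivers the covariance kernel of fractional Brownian motion with Hurst index $H^{(j)}=(3-\aD^{(j)})/2$, and a direct computation of $\int\int L_{t_{k}}(s,u)L_{t_{k'}}(s,u)\,ds\,u^{-(\aD^{(j)}+1)}du$ identifies the constant $\sigma_{B_{H^{(j)}}(1)}^{(j)}$. In the fast heavy-tailed subcase $\psi$ is not linearized and the exponent converges to the L\'evy--Khintchine symbol of the compensated Poisson integral defining $Z_{\aD^{(j)},\aR}$, which is identified with the stable random integral using \citet[Chapter 3]{samorodnitsky:taqqu:1994}. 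Under moderate growth the prefactor tends to $c_{j}^{\aD^{(j)}-1}$ and a change of variable $t\mapsto t/c_{j}$ together with an overall factor $c_{j}$ produces $c_{j}Y_{\aD^{(j)}}(\cdot/c_{j})$. Under slow growth the length collapses to $u\mathbf{1}_{\{0<s<t\}}$, the rate integration is handled without truncation (so no compensator remains after taking the $\aD^{(j)}$-stable limit), and $(ER_{1}^{\aD^{(j)}})^{1/\aD^{(j)}}$ appears as the overall stable scale.

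The main technical obstacle is justifying dominated convergence for the characteristic exponent. Using $|\psi(x)|\leq (x^{2}/2)\wedge 2|x|$, the domain must be split according to whether the rescaled argument is small or large, and each piece must be bounded by a $T$-independent integrable function; this is where Potter bounds on $\tFD^{(j)}$ (and on $\tFR$ in the heavy-tailed rate case) are needed, and it is the only nontrivial analytic step. Both \citet{mikosch:resnick:rootzen:stegeman:2002} and \citet{kaj:taqqu:2008} carry this out explicitly, and their estimates transfer once the scaling map above is in place. Finally, I would read off the characteristic function of each candidate limit from the integral representations given in the theorem statement and \citet[Chapter 3]{samorodnitsky:taqqu:1994} to confirm it matches the limit of the exponent, thereby completing the \fidi\ convergence.
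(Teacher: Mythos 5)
Your outline is correct and follows essentially the same route as the source: Theorem \ref{thm:mikosch} is quoted from \citet{mikosch:resnick:rootzen:stegeman:2002} and \citet{kaj:taqqu:2008} rather than reproved here, but the fidi characteristic-function machinery you describe (compensated-Poisson exponent, rescaling via the homogeneity of $L_t$, vague convergence of the normalized tail measures, linearization of $e^{ix}-1-ix$ in the Gaussian case, and dominated convergence justified by Potter bounds after splitting the domain) is exactly the method of those papers and of the paper's own proof of the aggregated-stream analogue, Theorem \ref{thm:novanishprop}, via Proposition \ref{prop:chf} and Lemmas \ref{lem:lengthprop} and \ref{lem:ebounds}. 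The only quibble is a bookkeeping slip in your rescaled mean measure, where a spurious factor of $T$ appears in both the prefactor and the normalized duration measure (they cancel, so the argument is unaffected); the regime-controlling quantity is $\l^{(j)}T\tFD^{(j)}(T)$ and the vaguely convergent object is $\FD^{(j)}(T\,du)/\tFD^{(j)}(T)$.
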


Real network traffic consists of several distinct types {and} in
this paper we are interested in the centered and scaled cumulative
input of the {superimposed}  stream{s} in $[0,Tt]$, namely 
\begin{equation}\label{eq:csaginp}
A_{cs}(t):=\frac{A(Tt)-\l\mD \mR Tt}{a(T)},
\end{equation}
for a suitable $a(T)$. In order to study the limit distribution of $A_{cs}(t)$ as $T\to\infty$, let $\Fast$, $\Moderate$, $\Slow$ be the subsets of indices of streams whose arrival intensities behave under the fast-, moderate-, and slow-growth regimes, respectively.

Assuming that all indices belong to one of these three classes, consider the following scenarios.

\begin{description}
\item[Scenario $\Fast$] There is at least one stream whose arrival
  intensity {satisfies} fast-growth; i.e. $\Fast  \neq \emptyset$. In this
  case, the aggregated stream's arrival intensity also {satisfies}
  fast-growth:
\begin{equation}\label{eq:sfast}
\l T \tFD(T)\geq\sum_{j\in\Fast}\l^{(j)}T\tFD^{(j)}(T)\to\infty,\quad T\to\infty.
\end{equation}

\item[Scenario $\Moderate$] No stream's arrival intensity {satisfies}
  fast-growth, but  at least one stream {satisfies}
  moderate-growth; 
i.e. $\Fast=\emptyset$ and
  $\Moderate\not=\emptyset$. Then, the aggregated stream's arrival
  intensity {satisfies} moderate growth, since 
\begin{equation}\label{eq:smoderate}
\l T\tFD(T)\to c^{\aD-1},\quad T\to\infty,
\end{equation}
where
\begin{equation}\label{eq:minad}
\aD:=\bigwedge_{j=1}^{p}\aD^{(j)}
\end{equation}
and
\begin{equation}\label{eq:cag}
c=\left(\sum_{j\in\Moderate}c_{j}^{\aD^{(j)}-1}\right)^{1/(\aD-1)}.
\end{equation}

\item[Scenario $\Slow$] All the stream's arrival intensities satisfy slow growth, that is $\Slow=\{1,\ldots,p\}$. In this case, the aggregated stream's arrival intensity also satisfies slow-growth:
\begin{equation}\label{eq:sslow}
\l T\tFD(T)=\sum_{j\in\Slow}\l^{(j)}T\tFD^{(j)}(T)\to0,\quad T\to\infty.
\end{equation}

\end{description}

The different growth regimes in Theorem \ref{thm:mikosch} are specified by the arrival intensity $\l^{(j)}$, and the distribution $\FD^{(j)}$ of the duration of the sessions of the $j$th stream. While $\l^{(j)}=\l^{(j)}(T)\to\infty$ as $T\to\infty$, $\FD^{(j)}$ does not vary with $T$. However, the growth regimes described in Scenarios $\Fast$, $\Moderate$ and $\Slow$ are given in terms of the arrival rate $\l$, and the distribution $\FD$ of the duration of the sessions of the aggregated stream and here both $\l$ and $\FD$ vary with $T$, as seen in \eqref{eq:fmix}. Therefore, we cannot directly apply Theorem \ref{thm:mikosch} for the aggregated stream when
\begin{equation}\label{eq:propj}
\l^{(j)}/\l = \textrm{proportion of the sessions that belong to the $j$th stream},\quad j=1,\ldots,p,
\end{equation}
are functions of $T$. Nevertheless, in the special case that these proportions are constant, $\FD$ does not vary with $T$, and a direct application of Theorem \ref{thm:mikosch} yields the following result.

\begin{mycor}\label{cor:constprop}
Suppose that for all $T$ (or at least for $T$ large enough), the proportions $\l^{(j)}/\l$ remain constant, $j=1,\ldots,p$, so that
\begin{equation*}
\tFD=\sum_{j=1}^{p} (\l^{(j)}/\l)\tFD^{(j)}\in RV_{-\aD},
\end{equation*}
where $\aD$ is given in \eqref{eq:minad}. Let the Scenarios $\Fast$, $\Moderate$ and $\Slow$ take the place of the fast-, moderate- and slow-growth regimes. 

If $\tFR\in RV_{-\aR}, \aR>\aD$ or $E[( R_{1}^{(1)} )^{2}]<\infty$,
then Theorem \ref{thm:mikosch} holds for $A_{cs}(\cdot)$, where
$\aD^{(j)}$, $\FD^{(j)}$ and $c_{j}$ are {replaced}  by $\aD$,
$\FD$ and the constant $c$ in \eqref{eq:cag}, respectively. 

If $\tFR\in RV_{-\aR}, \aR\leq\aD$, the distinction among Scenarios $\Fast$, $\Moderate$ and $\Slow$ is irrelevant, and limit results are discussed in Section \ref{sec:extensions}.
\end{mycor}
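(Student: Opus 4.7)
The plan is to recognize that under constant proportions, the aggregated counting function \eqref{eq:agcount} is itself $\PRM(\l\,ds\,\FD(du)\,\FR(dr))$ with $\FD$ independent of $T$; thus $A(\cdot)$ in \eqref{eq:caginp} is a single-stream $M/G/\infty$ cumulative input, and Theorem \ref{thm:mikosch} applies verbatim to $A_{cs}$ with $(\l^{(j)},\FD^{(j)},\aD^{(j)},c_j)$ replaced by $(\l,\FD,\aD,c)$. The proof therefore reduces to verifying two ingredients: (i) $\tFD\in RV_{-\aD}$, and (ii) the three Scenarios $\Fast$, $\Moderate$, $\Slow$ correspond to the three growth regimes \eqref{eq:lgrowth} for the aggregated stream.

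For (i), since the weights $p_j:=\l^{(j)}/\l$ are positive constants, $\tFD=\sum_j p_j\tFD^{(j)}$ is a finite mixture of regularly varying tails with indices $\aD^{(j)}\geq\aD$. Writing $\tFD^{(j)}(x)=L_j(x)x^{-\aD^{(j)}}$ with $L_j$ slowly varying, summands with $\aD^{(j)}>\aD$ are negligible against those with $\aD^{(j)}=\aD$, so
\begin{equation*}
\tFD(x)\sim\Bigl(\sum_{j:\aD^{(j)}=\aD}p_j L_j(x)\Bigr)x^{-\aD},\qquad x\to\infty,
\end{equation*}
and $\tFD\in RV_{-\aD}$ because a finite positive linear combination of slowly varying functions is slowly varying. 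For (ii), decompose $\l T\tFD(T)=\sum_{j=1}^{p}\l^{(j)}T\tFD^{(j)}(T)$ and apply \eqref{eq:lgrowth} componentwise. Under $\Fast$, a single fast-growth summand forces $\l T\tFD(T)\to\infty$; under $\Slow$, every summand vanishes; under $\Moderate$, $\Slow$-summands vanish, $\Moderate$-summands tend to $c_j^{\aD^{(j)}-1}$, and
\begin{equation*}
\l T\tFD(T)\to\sum_{j\in\Moderate}c_j^{\aD^{(j)}-1}=c^{\aD-1}
\end{equation*}
by the very definition of $c$ in \eqref{eq:cag}. These are precisely the fast-, slow-, and moderate-growth regimes \eqref{eq:lgrowth} applied to a single stream with tail index $\aD$.

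With (i) and (ii) in hand, Theorem \ref{thm:mikosch} applies directly to $A_{cs}(\cdot)$, yielding the stated limits. There is no substantive obstacle: the exponent $1/(\aD-1)$ in \eqref{eq:cag} was chosen precisely so that $c^{\aD-1}$ equals the sum of the per-stream moderate-growth limits, which is the only nontrivial bookkeeping step. The case $\aR\leq\aD$ falls outside the scope of Theorem \ref{thm:mikosch} (the three growth regimes collapse to a single limit) and is deferred to Section \ref{sec:extensions}.
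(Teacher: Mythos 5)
Your proposal is correct and follows essentially the same route as the paper: the authors likewise observe that constant proportions make $\FD$ independent of $T$, so the aggregated stream is a single $M/G/\infty$ input with $\tFD\in RV_{-\aD}$, the Scenarios $\Fast$, $\Moderate$, $\Slow$ match the growth regimes \eqref{eq:sfast}--\eqref{eq:sslow} via the exact decomposition $\l T\tFD(T)=\sum_{j}\l^{(j)}T\tFD^{(j)}(T)$, and Theorem \ref{thm:mikosch} then applies directly. Your verification of the regular variation of the mixture tail and of the identity $c^{\aD-1}=\sum_{j\in\Moderate}c_j^{\aD^{(j)}-1}$ merely makes explicit what the paper leaves as ``a direct application.''
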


\paragraph{\bf{Implications of Corollary \ref{cor:constprop}.}} This result provides a partial answer to the question of how much
aggregation is required for traffic to be Gaussian at large time
scales: Suppose that at least one traffic stream falls {in} the
fast-growth regime, thus generating a cumulative input that can be
approximated by fractional Brownian motion. {When applicable,}  Corollary
\ref{cor:constprop} {implies that} the superimposed traffic load
{can also be approximated by} fractional Brownian motion.

In the case that the traffic also contains streams that satisfy the
slow-growth regime, Corollary \ref{cor:constprop} is {somewhat}
counterintuitive due to the nature of the distribution tails of the
two limit processes. {Although}
these slow-growth streams 
produce cumulative inputs that are {approximately stable}
L\'evy-motion  when  
considered individually,  with the inclusion of one single
stream that behaves under the fast-growth regime, the cumulative
aggregated input is approximately Gaussian. 

Moreover, a sufficient condition for the fast-growth regime {of}
Scenario $\Fast$ is that a single stream, say the $j$th one, satisfies
fast-growth, even if all the other streams' arrival
intensities do not follow a growth regime at all. In this sense, Scenario
$\Fast$ is a {robust assumption}. We will see that  as long as one $\a_{D}^{(j)}<\aR$, the
limit result of Corollary \ref{cor:constprop} is still valid.  

In real networks,  there {are arguably}
streams with large
initiation rates. {For instance}, the arrival rates of
 http traffic must be large, since there are a
large number of users {constantly}
accessing websites {and this} translates
into Scenario $\Fast$.
Furthermore,
even though {some studies report or assume 
 session transmission rates
have infinite variance}, the assumption $E[(R_{1}^{(1)})^{2}]<\infty$
may be justified { by rate constraint mechanisms required for
  congestion control.}
{Although assumptions always deserve rigorous scrutiny,}
 Corollary \ref{cor:constprop} provides a
compelling explanation for the data example in Section
\ref{sec:intro}. 

{We now address more general assumptions which
allow the conclusions} of  Corollary \ref{cor:constprop} to hold. While the assumption of constant proportions $\l^{(j)}/\l$ {may
sometimes be}
reasonable, in general the proportions of sessions corresponding to {the}
 $p$ independent streams are not constant over time.
We may have that $\lim\l^{(j)}/\l$ exists or, more
generally, that $\l^{(j)}/\l\in(a,b)\subset(0,1)$ varies with no limit
whatsoever. Extending the conclusions of Corollary
\ref{cor:constprop} to under 
 weaker assumptions is the focus of the next section. 

\section{Behavior of cumulative load of  aggregated streams}\label{sec:main}

We prove that the conclusion of Corollary \ref{cor:constprop} is
still valid even when the proportion of the sessions corresponding to
the $p$ independent streams is not constant.
 Here is the result:

\begin{thm}\label{thm:novanishprop}
Assume that
\begin{equation}\label{eq:novanishprop}
\liminf_{T\to\infty}\bigvee_{j:\aD^{(j)}=\aD} \l^{(j)}/\l >0.
\end{equation}
Then, the conclusions of Corollary \ref{cor:constprop} regarding the limit distribution of the cumulative input of the aggregated stream $A_{cs}(\cdot)$ are still valid.
\end{thm}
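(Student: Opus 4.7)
\emph{Proof plan.} I would prove fidi convergence via pointwise convergence of characteristic functions. By independence of the $p$ streams and Campbell's formula applied to \eqref{eq:caginp}, the log chf of $A_{cs}(t)$ decomposes stream-wise:
\begin{equation*}
\log\E e^{i\theta A_{cs}(t)}=\sum_{j=1}^{p}\l^{(j)}\int_{\R}\int_{0}^{\infty}\int_{0}^{\infty}\Bigl[e^{i\theta rL_{Tt}(s,u)/a(T)}-1-i\theta rL_{Tt}(s,u)/a(T)\Bigr]ds\,\FD^{(j)}(du)\,\FR(dr).
\end{equation*}
The goal is to show that in each of the scenarios $\Fast$, $\Moderate$ or $\Slow$, the streams with $\aD^{(j)}>\aD$ contribute asymptotically zero while the streams with $\aD^{(j)}=\aD$ combine into precisely the limit chf appearing in Corollary \ref{cor:constprop}.

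Since $\l^{(j)}/\l\in[0,1]$, any sequence $T_{n}\to\infty$ admits a sub-subsequence along which $\l^{(j)}/\l\to p_{j}$ for every $j$, and \eqref{eq:novanishprop} forces $p_{j^{*}}>0$ for some $j^{*}$ with $\aD^{(j^{*})}=\aD$. Along such a sub-subsequence $\tFD(T)\in RV_{-\aD}$ is comparable to $\tFD^{(j^{*})}(T)$ up to bounded factors, so the scaling $a(T)$ is of the same order as $a^{(j^{*})}(T)$ in Theorem \ref{thm:mikosch}, and $\l^{(j)}\tFD^{(j)}(T)/(\l\tFD(T))$ stays in $[0,1]$ and tends to $0$ whenever $\aD^{(j)}>\aD$. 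The standard Taylor and regular-variation bounds used in the proofs of Theorem \ref{thm:mikosch} in \cite{mikosch:resnick:rootzen:stegeman:2002,kaj:taqqu:2008}, combined with the length estimates in Lemma \ref{lem:lengthprop}, then show that the $j$th summand is $O\bigl(\l^{(j)}\tFD^{(j)}(T)/(\l\tFD(T))\bigr)=o(1)$ whenever $\aD^{(j)}>\aD$. For $\aD^{(j)}=\aD$ I would rerun the change-of-variables computation of those proofs ($s=Ts'$, $u=Tu'$, the limit $\tFD^{(j)}(Tu')/\tFD^{(j)}(T)\to u'^{-\aD}$, and dominated convergence); because the surviving summands share the common exponent $\aD$, the elementary identity $\sum_{j}\l^{(j)}\tFD^{(j)}(T)/(\l\tFD(T))=1$ together with the vanishing of the $\aD^{(j)}>\aD$ terms makes the weighted sum collapse into precisely the scalar coefficient and limit process of Corollary \ref{cor:constprop}: the fBm variance in $\Fast$(i), the $\aR$-stable integral in $\Fast$(ii), $cY_{\aD}(\cdot/c)$ with $c$ from \eqref{eq:cag} in $\Moderate$, and the totally skewed $\aD$-stable L\'evy motion in $\Slow$. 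Since the identified limit does not depend on the particular $p_{j}$, the full sequence converges.

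The main obstacle is obtaining a dominating function uniform in $j$ and $T$ when the slowly varying factors $L_{j}(T):=T^{\aD^{(j)}}\tFD^{(j)}(T)$ and the weights $\l^{(j)}/\l$ fluctuate without limits; Potter's inequalities and a uniform truncation in $u$ of the length integrals (in the spirit of \cite{kaj:taqqu:2008}) should furnish the required bound, with \eqref{eq:novanishprop} being exactly what prevents $\tFD(T)$ from being negligible next to any single $\tFD^{(j)}(T)$. Once this uniformity is in hand, the bookkeeping across the three scenarios and the extension from one-dimensional to joint chfs via the Cram\'er--Wold device are routine.
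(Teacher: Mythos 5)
Your overall architecture coincides with the paper's: prove convergence of the log \fidichf of $A_{cs}$ via Proposition \ref{prop:chf}, identify the pointwise limit of the Poisson exponent through regular variation, and justify interchanging limit and integral by dominated convergence using Potter-type and Markov-type bounds, with \eqref{eq:novanishprop} guaranteeing that the components with $\aD^{(j)}=\aD$ control the mixture tail. The difference is organizational: you decompose the exponent stream-by-stream, whereas the paper never does --- it keeps the single integral against $\l T\tFD(Tu)$ and instead proves once and for all (Lemmas \ref{lem:frv} and \ref{lem:fpotter}) that the $T$-dependent mixture tail satisfies $\tFD(Tu)/\tFD(T)\to u^{-\aD}$ together with uniform Potter and Markov bounds keyed to the minimal index $\aD$, after which the single-stream proof is rerun verbatim with $\tFD$ in place of $\tFD^{(j)}$. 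Your moderate-growth bookkeeping and the identity $\sum_j\l^{(j)}\tFD^{(j)}=\l\tFD$ to collapse the surviving $\aD^{(j)}=\aD$ terms (whose individual weights need not converge) are fine, and the subsequence/Cram\'er--Wold step is routine as you say.

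There is, however, a genuine gap in the per-stream estimates. The hypotheses only require $\aR>\aD=\bigwedge_{j}\aD^{(j)}$ in the heavy-tailed-rate case, so some streams may have $\aD^{(j)}\geq\aR$. For such a stream, Theorem \ref{thm:mikosch} does not apply at all (its own scaling limit is the $\aR$-stable one of Section \ref{sec:extensions}, under a different normalization), so you cannot invoke ``the standard bounds used in the proofs of Theorem \ref{thm:mikosch}'' to conclude that its summand is $O\bigl(\l^{(j)}\tFD^{(j)}(T)/(\l\tFD(T))\bigr)$. Concretely, your dominating function for the $j$th summand needs a $\z$ with $\aD^{(j)}+\eta<1+\z$ (for integrability of $u^{\z-\aD^{(j)}-\eta}$ near $u=0$ after applying Potter's bound to $\tFD^{(j)}$) and simultaneously $1+\z<\aR$ (so that $E[(R_{1}^{(1)})^{1+\z}]<\infty$); these are incompatible when $\aD^{(j)}\geq\aR$. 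The cure is to abandon the per-stream ratio $\tFD^{(j)}(Tu)/\tFD^{(j)}(T)$ near $u=0$ and instead bound $\l^{(j)}\tFD^{(j)}(Tu)/(\l\tFD(T))\leq\tFD(Tu)/\tFD(T)\leq\mD T^{\aD-1+\eta}u^{-1}$, a Markov bound on the whole mixture whose exponent is the minimal $\aD$ precisely because \eqref{eq:novanishprop} gives $\tFD(T)\geq d\,\tFD^{(k^{*})}(T)$ for some $k^{*}$ with $\aD^{(k^{*})}=\aD$; this is the paper's \eqref{eq:fmarkovbound}. In other words, the uniform domination you flag as the main obstacle cannot be resolved stream-by-stream and must be carried out at the level of the mixture $\tFD$, which is exactly what the paper's Lemmas \ref{lem:frv} and \ref{lem:fpotter} are engineered to do; once that substitution is made, the rest of your plan goes through and reproduces the stated conclusion.
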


 Condition \ref{eq:novanishprop} implies that there exists $d>0$ such that for all $T$ sufficiently large, there is at least one $k=k(T)$ such that $\aD^{(k)}=\aD$ and $\l^{(k)}/\l>d$. Roughly speaking, this means that the proportion of the traffic with the heaviest-tailed duration always remains greater than a positive quantity.

All the limits in Theorem \ref{thm:novanishprop} follow from the convergence of the characteristic function of the finite-dimensional distributions (\emph{fidi chf}) of the processes. Thus, let $m\geq1$ represent the dimension, $0\leq t_{1},\ldots,t_{m}$ the times, and $z_{1},\ldots,z_{m}$ arbitrary real numbers; we need
\begin{equation*}
g(s,u,r)=\exp\left\{i\sum_{j=1}^{m}z_{j}rL_{t_{j}}(s,u)\right\}-1-i\sum_{j=1}^{m}z_{j}rL_{t_{j}}(s,u),
\end{equation*}
as defined in Proposition \ref{prop:chf}.

From the second integral in \eqref{eq:length}, we can compute the partial derivative of $L_{t}(s,u)$ with respect to $u$, which yields
\begin{equation}\label{eq:gufun}
g_{u}(s-u,u,r):=
\frac{\partial}{\partial u} g_{\big \vert_{(s-u,u,r)}}=
i\left(\exp\left\{i\sum_{j=1}^{m}z_{j}rL_{t_{j}}(s-u,u)\right\}-1\right)\sum_{k=1}^{m}z_{k}r1_{[0,t_{k}]}(s),
\end{equation}
where $g_{u}$ is the partial derivative of $g(s,u,r)$ with respect to
$u$. Moreover, putting together \eqref{eqn:sid4}, \eqref{eqn:sid1}, \eqref{eqn:sid3},
the bounds in Lemmas
\ref{lem:lengthprop} and  \ref{lem:ebounds}, {we get}
\begin{equation}\label{eq:ebound}
\left|\exp\left\{i\sum_{j=1}^{m}z_{j}rL_{t_{j}}(s-u,u)\right\}-1\right|\leq 2\sum_{j=1}^{m}|z_{j}|^{\z}(t_{j}\wedge u)^{\z}r^{\z},\quad 0\leq\z\leq1.
\end{equation}

We will use three more relations in the proof of Theorem \ref{thm:novanishprop}: For $0<\eta<1$, there exists a number $T_{0}=T_{0}(\eta)>0$ such that for $T\geq T_{0}$ and $b_{D}(\l T)\geq T_{0}$,
\begin{equation}\label{eq:potterbound}
2u^{-\aD}\left\{u^{-\eta}\vee u^{\eta}\right\}\geq\begin{cases}
\tFD(Tu)/\tFD(T),& u\geq T_{0}/T,\\
\tFD(b_{D}(\l T)u)/\tFD(b_{D}(\l T)),& u\geq T_{0}/b(\l T),
\end{cases}
\end{equation}
\begin{equation}\label{eq:fmarkovbound}
\frac{\tFD(Tu)}{\tFD(T)}\leq \mD T^{\aD-1+\eta}u^{-1},\quad u<T_{0}/T,
\end{equation}
and
\begin{equation}\label{eq:fbmarkovbound}
\frac{\tFD(b_{D}(\l T)u)}{\tFD(b_{D}(\l T))}\leq \mD b_{D}(\l T)^{\aD-1+\eta}u^{-1},\quad u<T_{0}/b_{D}(\l T).
\end{equation}
We can readily derive \eqref{eq:potterbound} from Lemma \ref{lem:fpotter}. Both \eqref{eq:fmarkovbound} and \eqref{eq:fbmarkovbound} follow from Markov's Inequality and, for example, \citet[][Proposition 0.8]{resnick:1987} or \citet[][Proposition 1.3.6]{bingham:goldie:teugels:1987}.

\begin{proof}[Proof of Theorem \ref{thm:novanishprop}]

First, we will prove parts (b) and (c). For both parts, set $0<\eta<\aD-1$ and $0<\z<1$ such that
\begin{equation*}
\aD+\eta<1+\zeta<\begin{cases}
\aR,&\textrm{ if ${\bar F}_{R}\in RV_{-\aR}, 1<\aR<2$},\\
2,&\textrm{ if $E[( R_{1}^{(1)} )^{2}]<\infty$}.
\end{cases}
\end{equation*}

\emph{Part (b).} Under Scenario $\Moderate$, use $a(T)=T$ and apply Proposition \ref{prop:chf}, yielding
\begin{equation*}
\ln E\exp\left\{i\sum_{j=1}^{m}z_{m}A_{cs}(t_{j})\right\}=\int_{-\infty}^{\infty}\int_{0}^{\infty}\int_{0}^{\infty}g_{u}(s-u,u,r)\l T\tFD(T)\frac{\tFD(Tu)}{\tFD(T)}dsdu\FR(dr),
\end{equation*}
and if we can take the limit inside the integral as $T\to\infty$, performing afterwards an integration by parts in $u$ gives 
\begin{align}\label{eq:limm}
&\to\int_{-\infty}^{\infty}\int_{0}^{\infty}\int_{0}^{\infty}g_{u}(s-u,u,r)c^{\aD-1}u^{-\aD}dsdu\FR(dr)\notag\\
&=c^{\aD-1}\int_{-\infty}^{\infty}\int_{0}^{\infty}\int_{0}^{\infty}g(s,u,r)EN^{\infty}_{\aD^{(j)},\FR}(ds,du,dr),
\end{align}
which is the log \fidichf of $cY_{\aD}(\cdot/c)$. Thus, it suffices to justify taking the limit inside the integral.

First observe there exists a number $T_{0}>0$ such that for $T\geq T_{0}$, 
\begin{equation}\label{eq:boundm}
\l T\tFD(T)\leq c^{\aD-1} + \eta,
\end{equation}
by the moderate-growth assumption. Together with \eqref{eq:ebound}-\eqref{eq:fmarkovbound} and a possibly larger $T_{0}$, the above implies that the integrand in the left side of \eqref{eq:limm} is bounded in $\{u\geq T_{0}/T\}$ by
\begin{equation*}
B_{\Moderate,(>)}(s,u,r):=4\left(c^{\aD-1} + \eta\right)u^{-\aD}(u^{-\eta}\vee u^{\eta})\sum_{j=1}^{m}\sum_{k=1}^{m}|z_{j}|^{\z}|z_{k}|(t_{j}\wedge u)^{\z}r^{1+\z}1_{[0,t_{k}]}(s),
\end{equation*}
and bounded in $\{u< T_{0}/T\}$ by
\begin{equation*}
B_{\Moderate,(<)}(s,u,r):=2\left(c^{\aD-1} + \eta\right)T_{0}^{\aD-1+\eta}\mD\sum_{j=1}^{m}\sum_{k=1}^{m}|z_{j}|^{\z}|z_{k}|u^{\z-\aD-\eta}r^{1+\z}1_{[0,t_{k}]}(s)1_{(0,1)}(u),
\end{equation*}
whenever $T\geq T_{0}$.  Here we used the bound
\begin{equation}\label{eq:ubound}
u^{\z}\leq(T_{0}/T)^{\aD-1+\eta}u^{1+\z-\aD-\eta},\quad 0<u<T_{0}/T.
\end{equation}

Therefore, \eqref{eq:limm} follows by the dominated convergence
theorem, since for all $T\geq T_{0}$ 
\begin{align*}
\int_{-\infty}^{\infty}\int_{0}^{\infty}\int_{0}^{\infty} &B_{\Moderate,(>)}(s,u,r)dsdu\FR(dr)\\
&\leq 4\left(c^{\aD-1} + \eta\right)E[(R_{1}^{(1)})^{1+\z}]\sum_{k=1}^{m}\sum_{j=1}^{m}|z_{j}|^{\z}|z_{k}|t_{k}\bigg\{ \int_{0}^{1}u^{\z-\aD-\eta}du+ t_{j}^{\z}\int_{1}^{\infty}u^{-\aD+\eta}du \bigg\},
\end{align*}
and
\begin{align*}
\int_{-\infty}^{\infty}\int_{0}^{\infty}\int_{0}^{\infty} &B_{\Moderate,(<)}(s,u,r)dsdu\FR(dr)\\
&\leq2\left(c^{\aD-1} + \eta\right)T_{0}^{\aD-1+\eta}\mD\sum_{j=1}^{m}\sum_{k=1}^{m}|z_{j}|^{\z}|z_{k}|t_{k}E[(R_{1}^{(1)})^{1+\z}]\int_{0}^{1}u^{\z-\aD-\eta}du,
\end{align*}
which are both finite by our choice of $\eta$ and $\z$.

\emph{Part (c).} Under Scenario $\Slow$, $a(T)=b_{D}(\l T)$, so we use Lemma \ref{lem:bbounds} and \citet[][Lemma 1]{mikosch:resnick:rootzen:stegeman:2002} to get
\begin{equation*}
\lim_{T\to\infty}T/a(T)=\infty.
\end{equation*}
Thus, it follows from the definition of $L_{t}(s,u)$ in \eqref{eq:length} that
\begin{equation*}
\lim_{T\to\infty}L_{tT/a(T)}(sT/a(T)-u,u)=u1_{[0,t]}(s).
\end{equation*}

Now, apply Proposition \ref{prop:chf}, perform the change of variables $r\mapsto ra(T)/T$, $u\mapsto uT/a(T)$, and use the scaling property in Lemma \ref{lem:lengthprop} to get
\begin{align*}
&\ln E\exp\left\{i\sum_{j=1}^{m}z_{m}A_{cs}(t_{j})\right\}\notag\\
&=\int_{-\infty}^{\infty}\int_{0}^{\infty}\int_{0}^{\infty}g_{u}(s-ua(T)/T,ua(T)/T,rT/a(T))\l a(T)\tFD(a(T)u)dsdu\FR(dr)\notag\\
&=\int_{-\infty}^{\infty}\int_{0}^{\infty}\int_{0}^{\infty}i\left(\exp\left\{\sum_{j=1}^{m}z_{j}rL_{t_{j}T/a(T)}(sT/a(T)-u,u)\right\}-1\right)\notag\\
&\quad\quad\quad\quad\quad\quad\quad\quad\quad\quad\quad\quad\quad\quad\quad\quad\times\sum_{k=1}^{m}z_{k}r1_{(0,t_{k})}(s)\l T\tFD(a(T)u)dsdu\FR(dr),\notag\\
\end{align*}
and {assuming we can take the limit inside the integral, the
limit as $T\to\infty$ is}
\begin{equation}\label{eq:lims}
\int_{-\infty}^{\infty}\int_{0}^{\infty}\int_{0}^{\infty}i\left(\exp\left\{\sum_{j=1}^{m}z_{j}ur1_{(0,t_{j})}(s)\right\}-1\right)\sum_{k=1}^{m}z_{k}r1_{(0,t_{k})}(s)u^{-\aD}dsdu\FR(dr),
\end{equation} 
which is the log \fidichf of $E[( R_{1}^{(1)} )^{\aD}]^{1/\aD}\Lambda_{a_{D}}(\cdot)$. Therefore, we must justify passing the limit inside the integral. This is done as follows.

First, by Lemma \ref{lem:frv}, there exists a number $T_{0}>0$ such that for $T\geq T_{0}$, 
\begin{equation}\label{eq:bounds}
\l T\tFD(a(T))\leq 2.
\end{equation}

Hence, by taking a possibly larger $T_{0}$, \eqref{eq:ebound}, \eqref{eq:potterbound} and \eqref{eq:fbmarkovbound} imply that the integrand in the left side of \eqref{eq:lims} is bounded in $\{u\geq T_0/a(T)\}$ by
\begin{equation*}
B_{\Slow,(>)}(s,u,r):=8u^{-\aD}(u^{-\eta}\vee u^{\eta})\sum_{j=1}^{m}\sum_{k=1}^{m}|z_{j}|^{\z}|z_{k}|(t_{j}\wedge u)^{\z}r^{1+\z}1_{[0,t_{k}]}(s),
\end{equation*}
and bounded in $\{u< T_0/a(T)\}$ by
\begin{equation*}
B_{\Slow,(<)}(s,u,r):=4T_0^{\aD-1+\eta}\mD\sum_{j=1}^{m}\sum_{k=1}^{m}|z_{j}|^{\z}|z_{k}|u^{\z-\aD-\eta}r^{1+\z}1_{[0,t_{k}]}(s)1_{(0,1)}(u),
\end{equation*}
whenever $T\geq T_{0}$ and $a(T)\geq T_0$, using
\begin{equation*}
u^{\z}\leq(T_{0}/a(T))^{\aD-1+\eta}u^{1+\z-\aD-\eta},\quad 0<u<T_{0}/a(T).
\end{equation*}

Therefore, \eqref{eq:lims} follows exactly as in part (b) from the dominated convergence theorem.

\emph{Part (a).} Under Scenario $\Fast$ and $E[(R_{1}^{(1)})^{2}]<\infty$, set $a(T)=[\l T^{3}\tFD(T)]^{1/2}$. Use Proposition \ref{prop:chf} and the change of variables $r\mapsto r a(T)/T$, to write
\begin{equation}\label{eq:chfffin}
\ln E\exp\left\{i\sum_{j=1}^{m}z_{m}A_{cs}(t_{j})\right\}=\int_{-\infty}^{\infty}\int_{0}^{\infty}\int_{0}^{\infty} g_{u}(s-u,u,rT/a(T))(a(T)/T)^{2}\frac{\tFD(Tu)}{\tFD(T)}dsdu\FR(dr),
\end{equation}
where
\begin{equation*}
(a(T)/T)^{2}=\l T\tFD(T)\to\infty,\quad T\to\infty,
\end{equation*}
by the fast-growth assumption.

By \eqref{eq:gufun}, as $T\to\infty$
\begin{equation*}
g_{u}(s-u,u,rT/a(T))(a(T)/T)^{2}= i\bigg(i\sum_{j=1}^{m}z_{j}r\frac{T}{a(T)}L_{t_{j}}(s-u,u)+o\left(\frac{T}{a(T)}\right)\bigg)\sum_{k=1}z_{k}r1_{[0,t_{k}]}(s)\frac{a(T)}{T}.
\end{equation*}
Hence, assuming we can pass the limit inside the integral, we use Lemma \ref{lem:lengthprop} (iii) to write
\begin{align*}
&\lim_{T\to\infty}\ln E\exp\{i\sum_{j=1}^{m}z_{m}A_{cs}(t_{j})\}\\
&=-\sum_{j=1}^{m}\sum_{k=1}^{m}z_{j}z_{k}\int_{-\infty}^{\infty}\int_{0}^{\infty}\int_{0}^{\infty}r^{2}L_{t_{j}}(s-u,u)1_{[0,t_{k}]}(s)u^{-\aD}dsdu\FR(dr)\\
&=-E[(R_{1}^{(1)})^{2}]\bigg(\frac{1}{(\aD-1)(2-\aD)(3-\aD)}\bigg)\times\\
&\bigg\{\sum_{j=1}^{m}\sum_{k=1}^{j}z_{j}z_{k}t_{k}^{3-\aD}+\sum_{j=1}^{m}\sum_{k=j+1}^{m}z_{j}z_{k}\left( t_{k}^{3-\aD}-(t_{k}-t_{j})^{3-\aD} \right)\bigg\}\\
&=-\frac12 E[(R_{1}^{(1)})^{2}]\sigma_{B_{H}(1)}^{2}\sum_{j=1}^{m}\sum_{k=1}^{m}z_{j}z_{k}\frac 12\left\{ |t_{j}|^{2H}+|t_{k}|^{2H} - |t_{j}-t_{k}|^{2H} \right\},
\end{align*}
where the last line follows by rearranging of the terms in the sum, $\sigma_{B_{H}(1)}^{2}$ is given in \eqref{eq:varbh1} and $H=(3-\aD)/2$. It remains to prove that we can take the limit inside the integral.

Let $0<\eta<\aD-1$. We use \eqref{eq:ebound}-\eqref{eq:fmarkovbound} with $\z=1$ and a possibly larger $T_{0}$, which imply that the integrand in Eq. \ref{eq:chfffin} is bounded in $\{u\geq T_{0}/T\}$ by
\begin{equation*}
B_{\Fast,(>)}:=4u^{-\aD}(u^{-\eta}\vee u^{\eta})\sum_{j=1}^{m}\sum_{k=1}^{m}|z_{j}z_{k}|(t_{j}\wedge u)r^{2}1_{[0,t_{k}]}(s),
\end{equation*}
and bounded in $\{u< T_{0}/T\}$ by
\begin{equation*}
B_{\Fast,(<)}:=2T_{0}^{\aD-1+\eta}\mD\sum_{j=1}^{m}\sum_{k=1}^{m}|z_{j}z_{k}|u^{1-\aD-\eta}r^{2}1_{[0,t_{k}]}(s)1_{(0,1)}(u),
\end{equation*}
whenever $T\geq T_{0}$. Here we used
\begin{equation*}
u\leq(T_{0}/T)^{\aD-1+\eta}u^{2-\aD-\eta},\quad 0<u<T_{0}/T.
\end{equation*}
The result now follows by the dominated convergence theorem, since
\begin{align*}
\int_{-\infty}^{\infty}\int_{0}^{\infty}\int_{0}^{\infty}&B_{\Fast,(>)}(s,u,r)dsdu\FR(dr)\\
&\leq
4E[(R_{1}^{(1)})^{2}]\sum_{j=1}^{m}\sum_{k=1}^{m}|z_{j}z_{k}|t_{k}\bigg\{\int_{0}^{1}u^{1-\aD-\eta}du+
t_{j} ER_{1}^{(1)}\int_{1}^{\infty}u^{-\aD+\eta}du \bigg\}, \\
\intertext{and}
\int_{-\infty}^{\infty}\int_{0}^{\infty}\int_{0}^{\infty}&B_{\Fast,(<)}(s,u,r)dsdu\FR(dr)\\
&\leq2 T_{0}^{\aD-1+\eta}\mD\sum_{j=1}^{m}\sum_{k=1}^{m}|z_{j}z_{k}|t_{k}E[(R_{1}^{(1)})^{2}]\int_{0}^{1}u^{1-\aD-\eta}du,
\end{align*}
and both bounds are finite by our choice of $\eta$.

Finally, still under Scenario $\Fast$, assume $\tFR\in RV_{-\aR}, 1<\aR<2$. Set $a(T)=Tb_{R}(\l T\tFD(T))$. By Proposition \ref{prop:chf}, an integration by parts in $u$ and the change of variables $s\mapsto s+u$:
\begin{align}
\ln &E\exp\left\{i\sum_{j=1}^{m}z_{m}A_{cs}(t_{j})\right\}\notag\\
&=\l T\tFD(T)\tFR(b_{R}(\l T\tFD(T)))\int_{-\infty}^{\infty}\int_{0}^{\infty}\int_{0}^{\infty}g(s,u,r)ds\frac{\FD(Tdu)}{\tFD(T)}\frac{\FR(b_{R}(\l T\tFD(T))dr)}{\tFR(b_{R}(\l T\tFD(T)))}\notag\\
&=\l T\tFD(T)\tFR(b_{R}(\l T\tFD(T)))\left\{I_{(u>\e,r>\e)}+I_{(u<\e,r>\e)}+I_{(r<\e)}\right\}\label{eq:chffinf},
\end{align}
where $\e>0$ and we split the integral into three parts according to the domains of integration $\{u>\e,r>\e\}$, $\{u<\e,r>\e\}$ and $\{r<\e\}$, respectively. To establish the limit result, we will take $\lim_{\e\to0}\lim_{T\to\infty}$ on both sides of \eqref{eq:chffinf}.

Fix $\e>0$ and start with the first integral. Let
\begin{align*}
\nu_{T}(du,dr) &:= \left(u\frac{\FD(Tdu)}{\tFD(T)}\right)\left(r\frac{\FR(b_{R}(\l T\tFD(T))dr)}{\tFR(b_{R}(\l T\tFD(T)))}\right),\\
\nu(du,dr) &:= \aD u^{-\aD} {du}\,\aR r^{-\aR} {dr},\\
G(u,r) &:= \frac{1}{ur} \int_{-\infty}^{\infty}g(s,u,r)ds,
\end{align*}
which allows writing
\begin{equation*}
I_{(u>\e,r>\e)}=\int_{\e}^{\infty}\int_{\e}^{\infty}G(u,r)\nu_{T}(du,dr).
\end{equation*}

The fast-growth regime, regular variation of $\tFR$, 
{\eqref{eqn:sid4}}
 and \citet[][Theorem 2.8]{billingsley:1999} imply $\nu_{T}\xrightarrow{v}\nu$ as $T\to\infty$. Moreover, $G(u,r)$ is jointly continuous and it follows from Lemmas \ref{lem:lengthprop} and \ref{lem:ebounds} that $|G(u,r)|\leq d_{0}\sum_{j=1}^{m}|z_{j}|t_{j}<\infty$, where $d_{0}$ is a positive constant. Therefore:
\begin{align}
\lim_{T\to\infty}I_{(u>\e,r>\e)} &= \int_{\e}^{\infty}\int_{\e}^{\infty}G(u,r)\nu(du,dr)\notag\\
&=\int_{-\infty}^{\infty}\int_{\e}^{\infty}\int_{\e}^{\infty}g(s,u,r)ds\cdot \aD u^{-(\aD+1)}du\cdot \aR r^{-(\aR+1)}dr\label{eq:lim1chf}
\end{align}

Now let $0\leq\z,\eta\leq1$ such that $\aD+\eta<1+\z<\aR$. By Lemmas \ref{lem:lengthprop} and \ref{lem:ebounds}, there exists $d_{\z}>0$ such that
\begin{equation*}
\left|I_{(u<\e,r>\e)}\right|\leq d_{\z}\sum_{j=1}^{m}|z_{j}|t_{j}\int_{\e}^{\infty}r^{1+\z}\frac{\FR(b_{R}(\l T\tFD(T))dr)}{\tFR(b_{R}(\l T\tFD(T)))}\int_{0}^{\e}u^{1+\z}\frac{\FD(Tdu)}{\tFD(T)}.
\end{equation*}
Furthermore, by fast-growth and regular variation of $\tFR$
\begin{equation*}
\int_{\e}^{\infty}r^{1+\z}\frac{\FR(b_{R}(\l T\tFD(T))dr)}{\tFR(b_{R}(\l T\tFD(T)))}\to\aR\int_{\e}^{\infty}r^{\z-\aR}dr=\frac{\aR}{\aR-1-\z}\e^{1+\z-\aR},\quad\quad T\to\infty.
\end{equation*}
Similarly, integration by parts and \eqref{eq:fmarkovbound} with $T\geq T_{0}$ such that $\e<T_{0}/T$ yields
\begin{align*}
\int_{0}^{\e}u^{1+\z}\frac{\FD(Tdu)}{\tFD(T)}&=(1+\z)\int_{0}^{\e}u^{\z}\frac{\tFD(Tu)}{\tFD(T)}du\\
&\leq (1+\z)\mD T_{0}^{\aD-1+\eta}\int_{0}^{\e}u^{\z-\aD-\eta}du\\
&\leq \frac{(1+\z)\mD T_{0}^{\aD-1+\eta}}{1+\z-\aD-\eta}\e^{1+\z-\aD-\eta},
\end{align*}
where we used the bound \eqref{eq:ubound}. Thus
\begin{equation}\label{eq:lim2chf}
\limsup_{T\to\infty}\left|I_{(u<\e,r>\e)}\right|\leq constant\cdot\e^{2(1+\z)-\aR-\aD-\z},
\end{equation}
where the exponent of $\e$ is positive if we additionally let $(\aR+\aD+\eta)/2<1+\z$.

For $I_{(r<\e)}$, use again Lemmas \ref{lem:lengthprop} and \ref{lem:ebounds} to get a $d_{1}>0$ such that
\begin{align*}
|I_{(r<\e)}|\leq d_{1}\sum_{j=1}^{m}\sum_{k=1}^{m}|z_{j}z_{k}|t_{j}\int_{0}^{\e}r^{2}\frac{\FR(b_{R}(\l T\tFD(T))dr)}{\tFR(b_{R}(\l T\tFD(T)))}\int_{0}^{\infty}u(t_{k}\wedge u)\frac{\FD(Tdu)}{\tFD(T)}.
\end{align*}
Analogously to the bound for $I_{(u<\e,r>\e)}$, it can be readily shown that there exists $T_{0}>0$ such that for $T\geq T_{0}$
\begin{equation*}
\int_{0}^{\e}r^{2}\frac{\FR(b_{R}(\l T\tFD(T))dr)}{\tFR(b_{R}(\l T\tFD(T)))}\leq \frac{3\mR T_{0}^{\aR}}{2-\aR}\epsilon^{2-\aR},
\end{equation*}
and
\begin{equation*}
\int_{0}^{\infty}u(t_{k}\wedge u)\frac{\FD(Tdu)}{\tFD(T)}\leq\int_{0}^{1}u^{2}\frac{\FD(Tdu)}{\tFD(T)}+t_{k}\int_{1}^{\infty}u\frac{\FD(Tdu)}{\tFD(T)}\leq \frac{3\mD T_{0}^{\aD}}{2-\aD}+t_{k}\frac{\aD}{\aD-1},
\end{equation*}
whence
\begin{equation}\label{eq:lim3chf}
\limsup_{T\to\infty}\left|I_{(r<\e)}\right|\leq constant\cdot\e^{2-\aR}.
\end{equation}

Also, by fast growth and the regular variation of $\tFR$
\begin{equation}\label{eq:frrv}
\l T\tFD(T)\tFR(b_{R}(\l T\tFD(T)))\to1,\quad\quad T\to\infty.
\end{equation}

Finally, we can put together \eqref{eq:chffinf}-\eqref{eq:frrv} to write:
\begin{align*}
\lim_{T\to\infty}\ln E\exp\left\{i\sum_{j=1}^{m}z_{j}A_{cs}(t_{j})\right\}&=\lim_{\e\to0}\lim_{T\to\infty}\ln E\exp\left\{i\sum_{j=1}^{m}z_{j}A_{cs}(t_{j})\right\}\\
&=\int_{-\infty}^{\infty}\int_{0}^{\infty}\int_{0}^{\infty}g(s,u,r)ds \cdot\aD u^{-(\aD+1)}du\cdot\aR r^{-(\aR+1)}dr.
\end{align*}
\end{proof}

\section{Remaining choices of $\aD$ and $\aR$}\label{sec:extensions}

We first study what happens if $\aR<\aD$, namely, $\aR<\aD^{(j)}$ for $j=1,\ldots,p$. Consider the centered and scaled cumulative input of any of the streams, say the first one, for simplicity. Set the normalizing term to $a^{(1)}(T)=b_{R}(\l T)$ (and the same for all other streams). Write
\begin{align*}
A_{cs}^{(1)}(t)&=\frac{1}{b_{R}(\l T)}\int_{-\infty}^{\infty}\int_{0}^{\infty}\int_{0}^{\infty}rL_{Tt}(s,u)\cprm{N}^{(1)}(ds,du,dr)\\
&=\int_{-\infty}^{\infty}\int_{0}^{\infty}\int_{0}^{\infty}rL_{Tt}(Ts,u)\cprm{N}^{(1)}(Tds,du,b_{R}(\l T)dr).
\end{align*}

First, note from the definition of $L_{t}(s,u)$ in \eqref{eq:length} that
\begin{equation*}
\lim_{T\to\infty}L_{Tt}(Ts,u)=u1_{[0,t]}(s).
\end{equation*}
Thus, analogous to the proof of Proposition \ref{prop:chf}, it can be shown that the log \fidichf of $A_{cs}^{(1)}$ is
\begin{align}
&\ln E \exp \left\{ i\sum_{j=1}^{m}z_{j}A_{cs}^{(1)}(t_{j}) \right\}\notag\\
&=\frac{\l^{(1)}}{\l} \l T\tFR(b_{R}(\l T))\notag\\
&\times\int_{-\infty}^{\infty}\int_{0}^{\infty}\int_{0}^{\infty}i\left(\exp\left\{ir\sum_{j=1}^{m}z_{j}L_{Tt_{j}}(Ts,u)\right\}-1\right)\sum_{k=1}^{m}z_{k}L_{Tt_{j}}(Ts,u)ds\FD^{(1)}(du)\frac{\tFR(b_{R}(\l T)r)}{\tFR(b_{R}(\l T))}dr.\label{eq:41}
\end{align}

Now observe that, provided we can take the limit inside the integral
\begin{align}
&\lim_{T\to\infty}\int_{-\infty}^{\infty}\int_{0}^{\infty}\int_{0}^{\infty}i\left(\exp\left\{ir\sum_{j=1}^{m}z_{j}L_{Tt_{j}}(Ts,u)\right\}-1\right)\sum_{k=1}^{m}z_{k}L_{Tt_{j}}(Ts,u)ds\FD^{(1)}(du)\frac{\tFR(b_{R}(\l T)r)}{\tFR(b_{R}(\l T))}dr\notag\\
&=\int_{-\infty}^{\infty}\int_{0}^{\infty}\int_{0}^{\infty}i\left(\exp\left\{ir\sum_{j=1}^{m}z_{j}u1_{[0,t_{j}]}(s)\right\}-1\right)\sum_{k=1}^{m}z_{k}u1_{[0,t_{j}]}(s)ds\FD^{(1)}(du)r^{-\aR}dr.\label{eq:42}
\end{align}
Since $|\l^{(1)}/\l|\leq1$, then
\begin{align}
\epsilon^{(1)}(T)&:=\ln E \exp \left\{ i\sum_{j=1}^{m}z_{j}A_{cs}^{(1)}(t_{j}) \right\}\notag\\
&\quad-\frac{\l^{(1)}}{\l}\int_{-\infty}^{\infty}\int_{0}^{\infty}\int_{0}^{\infty}i\left(\exp\left\{ir\sum_{j=1}^{m}z_{j}u1_{[0,t_{j}]}(s)\right\}-1\right)\sum_{k=1}^{m}z_{k}u1_{[0,t_{j}]}(s)ds\FD^{(1)}(du)r^{-\aR}dr\notag\\
&\to0,\quad\quad T\to\infty,\label{eq:43}
\end{align}
which yields the following result.

\begin{thm}\label{thm:arlessad}
Let $\Psi:=\Psi_{T}$ be the \fidichf of
\begin{align}
\sum_{j=1}^{p}\frac{\l^{(j)}}{\l}E[(D_{1}^{(j)})^{\aR}]^{1/\aR} &\Lambda_{\aR}(t)\notag\\
&\stackrel{d}{=}\left(\left(-\cos\frac{\pi\aR}{2}\right)\frac{2\Gamma(2-\aR)}{\aR(\aR-1)}\right)^{-1/\aR}\int_{-\infty}^{\infty}\int_{0}^{\infty}1_{[0,t]}(s)uM_{\aR}(ds,du),\label{eq:chfasymparlessad}
\end{align}
where for each $T$, $\Lambda_{\aR}(\cdot)$ is an  $\aR-$stable L\'evy motion totally skewed to the right with index $\aR$ and $M_{\aR}(ds,du)$ is $\aR-$stable with control measure $m(ds,du)=ds\FD(du)$. Then,
\begin{equation}\label{eq:45}
\lim_{T\to\infty}\left\{\ln E \exp \bigg\{ i\sum_{j=1}^{m}z_{j}A_{cs}^{(1)}(t_{j}) \bigg\}-\ln\Psi_{t_{1},\ldots,t_{m}}(z_{1},\ldots,z_{m})\right\}=0.
\end{equation}

In addition, if for $j=1,\ldots,p$, the limits
\begin{equation}\label{eq:limpropj}
w^{(j)}:=\lim_{T\to\infty}\l^{(j)}/\l
\end{equation}
exist, then the \fidichf of $A_{cs}(\cdot)$ converges to the \fidichf
of the process defined by \eqref{eq:chfasymparlessad}, with $w^{(j)}$ and $\sum_{j=1}^{p}w^{(j)}F_{D}^{(j)}(\cdot)$ replacing $\l^{(j)}/\l$ and $F_{D}(\cdot)$.
\end{thm}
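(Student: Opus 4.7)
By independence of the stream-level Poisson random measures $N^{(k)}$, the aggregated log \fidichf decomposes as
\begin{equation*}
\ln E\exp\left\{i\sum_{j=1}^{m}z_{j}A_{cs}(t_{j})\right\}=\sum_{k=1}^{p}\ln E\exp\left\{i\sum_{j=1}^{m}z_{j}A_{cs}^{(k)}(t_{j})\right\},
\end{equation*}
where each $A_{cs}^{(k)}$ carries the common normalizer $a(T)=b_{R}(\l T)$. Thus it suffices to analyze the chf of a single stream and sum.

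For each $k$, the computation carried out in \eqref{eq:41}--\eqref{eq:43} for $k=1$ applies verbatim with $\FD^{(k)}$ and $\l^{(k)}/\l$ in place of $\FD^{(1)}$ and $\l^{(1)}/\l$. Regular variation of $\tFR$ gives $\l T\tFR(b_{R}(\l T))\to 1$, and the inner integral in \eqref{eq:42} converges to the log \fidichf of an $\aR$-stable process with control measure $ds\,\FD^{(k)}(du)$. Since $|\l^{(k)}/\l|\leq 1$, subtracting this limit expression weighted by $\l^{(k)}/\l$ and summing over $k$ gives \eqref{eq:45}, the identification of the sum with the right-hand side of \eqref{eq:chfasymparlessad} being a routine manipulation of characteristic functions of $\aR$-stable random measures \citep[Chapter 3]{samorodnitsky:taqqu:1994}.

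The main obstacle is justifying the dominated convergence step in \eqref{eq:42}, analogous to what is done in the proof of Theorem \ref{thm:novanishprop} but under the reversed inequality $\aR<\aD\leq\aD^{(k)}$. My plan is to pick $\zeta\in(0,1)$ with $\aR-1<\zeta<\aD^{(k)}-1$, which is possible because $\aR<\aD^{(k)}$. Combining $|e^{ix}-1|\leq 2|x|^{\zeta}$ with the length bound $L_{Tt}(Ts,u)\leq u\wedge Tt$ from Lemma \ref{lem:lengthprop} dominates the integrand by a constant times $r^{1+\zeta}u^{\zeta}(u\wedge Tt_{j})\,1_{(-u/T,t_{k})}(s)$. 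Integration in $s$ is over an interval of length at most $t_{k}+u$; the $u$-integral against $\FD^{(k)}(du)$ is handled by Potter-type bounds analogous to \eqref{eq:potterbound}--\eqref{eq:fmarkovbound} (integrable at infinity because $\zeta+1<\aD^{(k)}$); and the $r$-integral against $\tFR(b_{R}(\l T)r)/\tFR(b_{R}(\l T))\,dr$ must be split at $r=1$, using the Lipschitz bound $|e^{ix}-1|\leq|x|$ for $r\leq 1$ and the trivial bound $|e^{ix}-1|\leq 2$ for $r>1$, each handled by a Potter bound on $\tFR$ giving integrability since $\aR\in(1,2)$.

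For the second assertion, if $w^{(j)}=\lim_{T\to\infty}\l^{(j)}/\l$ exists for every $j$, then $\FD=\sum_{j}(\l^{(j)}/\l)\FD^{(j)}\to\sum_{j}w^{(j)}\FD^{(j)}$ and the difference in \eqref{eq:45} acquires a genuine limit; replacing $\l^{(j)}/\l$ by $w^{(j)}$ and $\FD$ by $\sum_{j}w^{(j)}\FD^{(j)}$ in \eqref{eq:chfasymparlessad} then identifies the limiting \fidichf of $A_{cs}(\cdot)$ directly.
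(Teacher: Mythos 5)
Your overall architecture matches the paper's: decompose the log \fidichf by independence of the $N^{(k)}$, reduce to a single stream normalized by $a(T)=b_{R}(\l T)$, pass to the limit by dominated convergence, and identify the limit with $\ln\Psi$. The gap is in the domination step, which is the crux here. On $\{r\leq1\}$ you propose the Lipschitz bound $|e^{ix}-1|\leq|x|$ with $x=r\sum_{j}z_{j}L_{Tt_{j}}(Ts,u)$. Since $L_{Tt_{j}}(Ts,u)\leq u$ and the integrand carries the second factor $\sum_{k}z_{k}L_{Tt_{k}}(Ts,u)$, this produces a dominating function whose $u$-dependence is $u^{2}$ integrated against $\FD^{(k)}(du)$; but $E[(D_{1}^{(k)})^{2}]=\infty$ because $\aD^{(k)}<2$, so there is no integrable dominating function and dominated convergence cannot be invoked. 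Your originally proposed exponent $\zeta$ with $\aR-1<\zeta<\aD^{(k)}-1$ is in fact exactly the right choice for the region $r<1$ (it keeps the $u$-moment finite and still yields an integrable power of $r$ near the origin), while the trivial bound $|e^{ix}-1|\leq2$ is the right choice for $r\geq1$ (with a single $\zeta>\aR-1$ the tail integral $\int_{1}^{\infty}r^{1+\zeta}r^{-\aR+\eta}dr$ diverges, which is presumably why you switched bounds there). This is precisely what the paper does: it takes two exponents $0<\z<\z'<1$ with $1+\z<\aR-\eta<\aR+\eta<1+\z'<\aD$, using the small one on $\{r\geq1\}$ and the large one on $\{r<1\}$. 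Replace your Lipschitz bound on $\{r\leq1\}$ by the $\z'$-H\"older bound and the argument closes.

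Two smaller points. First, your appeal to ``Potter-type bounds analogous to \eqref{eq:potterbound}--\eqref{eq:fmarkovbound}'' for the $u$-integral is misplaced: in \eqref{eq:41} the variable $u$ is not rescaled, so all that is needed is the moment condition $E[(D_{1}^{(k)})^{1+\z'}]<\infty$, i.e.\ $1+\z'<\aD^{(k)}$. Second, ``each handled by a Potter bound on $\tFR$'' glosses over the region $r<T_{0}/b_{R}(\l T)$, where Potter's bound is unavailable and one must instead use Markov's inequality together with $r<T_{0}/b_{R}(\l T)$ to absorb the resulting factor $b_{R}(\l T)^{\aR-1+\eta}$; this is exactly where the paper's requirement $1+\z'>\aR+\eta$ comes from.
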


\begin{proof}
By the independence of $N^{(j)}, j=1,\ldots,p$,
\begin{align}
\ln E \exp &\left\{ i\sum_{j=1}^{m}z_{j}A_{cs}(t_{j}) \right\}\notag\\
&-\int_{-\infty}^{\infty}\int_{0}^{\infty}\int_{0}^{\infty}i\left(\exp\left\{ir\sum_{j=1}^{m}z_{j}u1_{[0,t_{j}]}(s)\right\}-1\right)\sum_{k=1}^{m}z_{k}u1_{[0,t_{j}]}(s)ds\FD(du)r^{-\aR}dr\notag\\
&=\sum_{j=1}^{p}\epsilon^{(j)}(T)\to0\quad\quad T\to\infty.\label{eq:47}
\end{align}
Analogously to the proof of \eqref{eq:limschf}, the second integral in \eqref{eq:47} is equal to $\ln\Psi_{t_{1},\ldots,t_{m}}(z_{1},\ldots,z_{m})$. Thus, it
only remains to justify taking the limit \eqref{eq:43}. 

Let $0<\z<\z'<1$ and $0<\eta<1$ such that $1+\z<\aR-\eta<\aR+\eta<1+\z'<\aD$. Similarly to \eqref{eq:potterbound} and \eqref{eq:fbmarkovbound}, there exists $T_{0}:=T_{0}(\eta)>0$ such that for $T\geq T_{0}$ and $b_{R}(\l T)\geq T_{0}$,
\begin{equation*}
\frac{\tFR(b_{R}(\l T)r)}{\tFR(b_{R}(\l T))}\leq\begin{cases}
2r^{-\aR}\left\{r^{-\eta}\vee r^{\eta}\right\}, &r\geq T_{0}/b_{R}(\l T),\\
\mR b_{R}(\l T)^{\aR-1+\eta}r^{-1}, &r\in\R.
\end{cases}
\end{equation*}
Together with Lemmas \ref{lem:lengthprop} and \ref{lem:ebounds}, this implies that the integrand in the left side of \eqref{eq:42} is bounded in $\{r\geq 1\}$
\begin{equation*}
B_{(>)}:=2^{1-\z}u^{1+\z}r^{\z-\aR+\eta}\sum_{j=1}^{m}\sum_{k=1}^{m}|z_{j}|^{\z}|z_{k}|1_{[0,t_{k}]}(s)1_{[1,\infty)}(u),
\end{equation*}
and bounded in $\{r< 1\}$ by
\begin{equation*}
B_{(<)}:=2^{1-\z'}\mR T_{0}^{\aR-1+\eta} u^{1+\z'}r^{\z'-\aR-\eta} \sum_{j=1}^{m}\sum_{k=1}^{m}|z_{j}|^{\z'}|z_{k}|1_{[0,t_{k}]}(s)1_{(0,1)}(r),
\end{equation*}
whenever $b_{R}(\l T)>T_{0}$. Here we used
\begin{equation*}
r^{\z'}\leq(T_{0}/(b_{R}(\l T))^{\aR-1+\eta}r^{1+\z'-\aD-\eta}.
\end{equation*}
By our choice of $\z$, $\z'$ and $\eta$, both bounds are integrable and we can use dominated convergence to prove the result.
\end{proof}

In principle, it also is possible to have $\aD=\aR$. However, we
cannot say much except in the special case $\aD=\aR=2$, in which
{case} the limit process is a Brownian motion {provided
\eqref{eq:limpropj} holds}. We refer the reader to \citet[][Theorem
4]{kaj:taqqu:2008} for the formal statement of this case. 

\section{Technical proofs}\label{sec:techproofs}

This section contains a collection of technical results {needed for our
proofs.}
The first lemma establishes bounds for
$b_{D}(\cdot)=(1/\tFD)^{\leftarrow}(\cdot)$
which yield $b_{D}(\lambda T) \to \infty$.
This is not immediate since the function {$b_D$ depends on $T$.}

\begin{mylem}\label{lem:bbounds}
The quantile functions given \eqref{eq:bj} and \eqref{eq:bmix} satisfy the following inequality.
\begin{equation}\label{eq:bbounds}
\bigvee_{j=1}^{p} b_{D}^{(j)}(p\l^{(j)} T)\geq b_{D}(\l T) \geq \bigvee_{j=1}^{p} b_{D}^{(j)}(\l^{(j)} T),\quad T>0.
\end{equation}
Hence
\begin{equation*}
b_{D}(\l T)\to\infty,\quad T\to\infty.
\end{equation*}
\end{mylem}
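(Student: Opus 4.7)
The plan is to derive the two inequalities in \eqref{eq:bbounds} directly from the definitions of $b_D$ and $b_D^{(j)}$ as generalized inverses, exploiting the mixture representation $\tFD = \sum_{j=1}^{p} (\lambda^{(j)}/\lambda)\tFD^{(j)}$ from \eqref{eq:fmix}. The two standard facts I will use are: (i) if $y < b_D(t)$ then $\tFD(y) > 1/t$ (since $b_D(t)=\inf\{x:\tFD(x)\leq 1/t\}$), and (ii) by right continuity of $\tFD$, $\tFD(b_D(t))\leq 1/t$. The same facts hold for each $\tFD^{(j)}$ and $b_D^{(j)}$.

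For the lower bound, I would fix $j$ and take any $y < b_D^{(j)}(\lambda^{(j)} T)$. Then $\tFD^{(j)}(y) > 1/(\lambda^{(j)} T)$, and the mixture representation gives
\begin{equation*}
\tFD(y)\;\geq\;\frac{\lambda^{(j)}}{\lambda}\tFD^{(j)}(y)\;>\;\frac{\lambda^{(j)}}{\lambda}\cdot\frac{1}{\lambda^{(j)} T}\;=\;\frac{1}{\lambda T},
\end{equation*}
so $y < b_D(\lambda T)$. Letting $y \uparrow b_D^{(j)}(\lambda^{(j)} T)$ and then taking the maximum over $j$ yields the left inequality.

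For the upper bound, let $M=\bigvee_{j=1}^{p}b_D^{(j)}(p\lambda^{(j)} T)$. For each $j$, monotonicity of $\tFD^{(j)}$ together with $M\geq b_D^{(j)}(p\lambda^{(j)} T)$ gives $\tFD^{(j)}(M)\leq \tFD^{(j)}(b_D^{(j)}(p\lambda^{(j)} T))\leq 1/(p\lambda^{(j)} T)$. Summing with the mixture weights,
\begin{equation*}
\tFD(M)\;=\;\sum_{j=1}^{p}\frac{\lambda^{(j)}}{\lambda}\tFD^{(j)}(M)\;\leq\;\sum_{j=1}^{p}\frac{\lambda^{(j)}}{\lambda}\cdot\frac{1}{p\lambda^{(j)} T}\;=\;\frac{1}{\lambda T},
\end{equation*}
which by the definition of $b_D$ as an infimum forces $b_D(\lambda T)\leq M$. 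This gives the right inequality; the factor $p$ is exactly what is needed so the individual bounds $1/(p\lambda^{(j)}T)$ combine to $1/(\lambda T)$.

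For the final assertion that $b_D(\lambda T)\to\infty$, I would just invoke the lower bound with any fixed index, say $j=1$. Since $\tFD^{(1)}\in RV_{-\aD^{(1)}}$, its inverse $b_D^{(1)}$ is regularly varying with positive index $1/\aD^{(1)}$ and hence unbounded at infinity; because $\lambda^{(1)}(T)\geq \lambda^{(1)}(T_0)>0$ for all $T$ (the $\lambda^{(j)}$ are either positive constants or go to infinity), $\lambda^{(1)}T\to\infty$, so $b_D(\lambda T)\geq b_D^{(1)}(\lambda^{(1)} T)\to\infty$. The only subtlety I anticipate is being careful with the generalized inverse conventions (strict versus non-strict inequalities on the level sets), but this is purely bookkeeping and the right-continuity of $\tFD$ makes both directions go through cleanly.
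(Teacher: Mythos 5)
Your proof is correct and follows essentially the same route as the paper's: both inequalities come from the mixture identity $\tFD=\sum_{j}(\l^{(j)}/\l)\tFD^{(j)}$ combined with the standard generalized-inverse facts ($\tFD(b_{D}(t))\leq 1/t$ by right continuity, and $\tFD(y)>1/t$ for $y<b_{D}(t)$), with the factor $p$ playing exactly the role you identify. The only cosmetic difference is that for the right-hand inequality the paper evaluates $\tFD^{(j)}$ directly at $b_{D}(\l T)$ to conclude $b_{D}^{(j)}(\l^{(j)}T)\leq b_{D}(\l T)$, whereas you argue via points $y$ strictly below $b_{D}^{(j)}(\l^{(j)}T)$; these are the same argument.
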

\begin{proof}

Since $\tFD^{(j)}$ is decreasing for all $j$, then
\begin{align*}
\tFD\left(\bigvee_{j=1}^{p} b_{D}^{(j)}(p\l^{(j)} T)\right)&\leq\sum_{j=1}^{p}(\l^{(j)}/\l)\tFD^{(j)}(b_{D}^{(j)}(p\l^{(j)} T))\\
&\leq\sum_{j=1}^{p}(\l^{(j)}/\l)(p\l^{(j)} T)^{-1}\\
&=(\l T)^{-1}.
\end{align*}
Thus, the left side of \eqref{eq:bbounds} follows.

On the other hand, since $\FD$ is right continuous, we have for each $j=1,\ldots,p$:
\begin{equation*}
(\l^{(j)}/\l)\tFD^{(j)}(b_{D}(\l T))\leq\sum_{k=1}^{p}(\l^{(k)}/\l)\tFD^{(k)}(b_{D}(\l T))\leq(\l T)^{-1},
\end{equation*}
whence
\begin{equation*}
\tFD^{(j)}(b_{D}(\l T))\leq(\l^{(j)} T)^{-1}.
\end{equation*}
Therefore, the right side of \eqref{eq:bbounds} follows.
\end{proof}

The distribution {$\FD=\sum_{j=1}^p (\lambda^{(j)}/\lambda)
F_D^{(j)}$ of session durations of superimposed streams}
 is a function of $T$ since {$\lambda^{(j)}$ and $\lambda $ depend
   on $T$.} Nevertheless, {$\bar F_D$  behaves as a regularly varying function.}

\begin{mylem}\label{lem:frv}
Under the assumption \eqref{eq:novanishprop}
\begin{equation}\label{eq:frv}
\lim_{T\to\infty}\frac{\tFD(Tu)}{\tFD(T)}=\lim_{T\to\infty}\l T\tFD(b_{D}(\l T) u)=u^{-\aD},\quad u>0,
\end{equation}
{and therefore, 
in $M_{+}(0,\infty]$,
\begin{equation}\label{eqn:sid4}
\frac{\tFD(Tdu)}{\tFD(T)}\xrightarrow{v}\aD u^{-(\aD+1)}du,\quad\quad T\to\infty.
\end{equation}
}
\end{mylem}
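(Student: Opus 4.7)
The approach is to exploit the structure $\tFD=\sum_j (\l^{(j)}/\l)\tFD^{(j)}$ (with $\l^{(j)}/\l$ depending on $T$) and to show that, under \eqref{eq:novanishprop}, only the streams with $\aD^{(j)}=\aD$ contribute asymptotically. Since these dominant streams share the regular-variation exponent $-\aD$, a weighted-average argument then yields both claimed limits.

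For the first limit, I would introduce the probability vector
$$p_j(T):=\frac{(\l^{(j)}/\l)\,\tFD^{(j)}(T)}{\tFD(T)},\qquad j=1,\ldots,p,$$
so that $\tFD(Tu)/\tFD(T)=\sum_j p_j(T)\,\tFD^{(j)}(Tu)/\tFD^{(j)}(T)$. Let $J_*:=\{j:\aD^{(j)}=\aD\}$. Assumption \eqref{eq:novanishprop} produces $d>0$ and, for each large $T$, some $k=k(T)\in J_*$ with $\l^{(k)}/\l\geq d$, hence $\tFD(T)\geq d\,\tFD^{(k(T))}(T)\geq d\,\min_{k\in J_*}\tFD^{(k)}(T)$. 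Potter-type bounds give $\tFD^{(j)}(T)\leq T^{-\aD^{(j)}+\delta}$ for $j\notin J_*$ and $\min_{k\in J_*}\tFD^{(k)}(T)\geq T^{-\aD-\delta}$ eventually; choosing $\delta<(\aD^{(j)}-\aD)/2$ forces $p_j(T)\to 0$ for every $j\notin J_*$, so $\sum_{j\in J_*}p_j(T)\to 1$. Since $J_*$ is finite and $\tFD^{(j)}(Tu)/\tFD^{(j)}(T)\to u^{-\aD}$ for each $j\in J_*$ by regular variation, the weighted average converges to $u^{-\aD}$.

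For the second limit I would factor
$$\l T\tFD(b_D(\l T)u)=\l T\tFD(b_D(\l T))\cdot\frac{\tFD(b_D(\l T)u)}{\tFD(b_D(\l T))}.$$
The first factor tends to $1$ via the defining inverse relation $\tFD(b_D(\l T))\leq 1/(\l T)\leq \tFD(b_D(\l T)-)$ combined with the approximate regular variation of $\tFD$ shown above, which makes any jump at $b_D(\l T)$ asymptotically negligible. The second factor is handled by repeating the argument of the first limit with $T$ replaced by $S(T):=b_D(\l T)$; Lemma \ref{lem:bbounds} guarantees $S(T)\to\infty$, so Potter's bounds at $S(T)$ still apply and the same weighted-average argument delivers $u^{-\aD}$.

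The vague convergence \eqref{eqn:sid4} then follows from the pointwise convergence $\tFD(Tu)/\tFD(T)\to u^{-\aD}$ at every $u\in(0,\infty)$ (each a continuity point of the limit) via the standard correspondence between tail convergence and vague convergence of Radon measures on $(0,\infty]$ \citep[see, e.g.,][Proposition 3.1]{resnick:1987}. The main difficulty lies in the first step: assumption \eqref{eq:novanishprop} permits the weights $\l^{(j)}/\l$ to oscillate, so the $p_j(T)$ for $j\in J_*$ need not converge individually; the argument nonetheless goes through because all ratios $\tFD^{(j)}(Tu)/\tFD^{(j)}(T)$ with $j\in J_*$ share the common limit $u^{-\aD}$, making the weighted sum robust to this oscillation.
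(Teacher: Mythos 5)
Your proof is correct and follows essentially the same route as the paper: both use \eqref{eq:novanishprop} to produce a dominant stream with weight at least $d>0$, combine this with Potter-type bounds to show the streams with $\aD^{(j)}>\aD$ contribute negligibly, exploit the common exponent $-\aD$ of the surviving terms, and then obtain the second limit from the inverse relation defining $b_D$ together with the first limit evaluated along $b_D(\l T)\to\infty$ (via Lemma \ref{lem:bbounds}). Your convex-combination formulation with the weights $p_j(T)$ is a somewhat cleaner packaging than the paper's reciprocal decomposition into $B_1$, $B_{2,j}$, $C_j$, but the underlying estimates are identical.
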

\begin{proof}
Note that $\tFD\in RV_{-\aD}$ for each fixed $T$. However, because $\FD$ varies with $T$, the limit is not straightforward.

Fix an arbitrary $u>0$. We start by writing
\begin{align*}
\frac{\tFD(Tu)}{\tFD(T)}&=\frac{\sum_{j:\aD^{(j)}=\aD}(\l^{(j)}/\l)\tFD^{(j)}(Tu)}{\tFD(T)}+\sum_{j:\aD^{(j)}>\aD}(\l^{(j)}/\l)\frac{\tFD^{(j)}(Tu)}{\tFD(T)}\\
&=: B+\sum_{j:\aD^{(j)}>\aD}(\l^{(j)}/\l)C_{j},
\end{align*}
and additionally write
\begin{align*}
B^{-1}&=\frac{\sum_{j:\aD^{(j)}=\aD}(\l^{(j)}/\l)\tFD^{(j)}(T)}{\sum_{j:\aD^{(j)}=\aD}(\l^{(j)}/\l)\tFD^{(j)}(Tu)}+\sum_{j:\aD^{(j)}>\aD}(\l^{(j)}/\l)\frac{\tFD^{(j)}(T)}{\sum_{k:\a_{k}=\aD}(\l_{k}/\l)\tFD^{(k)}(Tu)}\\
&=:B_{1}+\sum_{j:\aD^{(j)}>\aD}(\l^{(j)}/\l)B_{2,j}.
\end{align*}

Thus, the first limit in \eqref{eq:frv} will follow by proving
$B_{1}\to u^{\aD}$, $B_{2,j}\to0$ and $C_{j}\to0$ as $T\to\infty$, for
all $j$ such that $\aD^{(j)}>\a_{{D}}$. 

First, by Potter's bounds applied to the regular
variation of each $\tFD^{(j)}$, {we have} as $T\to\infty$,
$$
B_{1} \sim\frac{\sum_{j:\aD^{(j)}=\aD}(\l^{(j)}/\l)\tFD^{(j)}(T)}{\sum_{j:\aD^{(j)}=\aD}(\l^{(j)}/\l)\tFD^{(j)}(T)u^{-\aD}}
=u^{\aD}.
$$

Now, consider $B_{2,j}$ for $\aD^{(j)}>\aD$. Choose an arbitrarily large $z>\min\{u^{-\aD^{(j)}},u^{\aD^{(j)}}\}$. By regular variation, for $T$ sufficiently large:
\begin{equation*}
\frac{\tFD^{(k)}(Tu)}{\tFD^{(j)}(Tu)}>z,\quad\quad \frac{\tFD^{(k)}(T)}{\tFD^{(j)}(T)}>z,
\end{equation*}
for all $k$ such that $\aD^{(k)}=\aD$. In addition
\begin{equation*}
\frac{\tFD^{(j)}(Tu)}{\tFD^{(j)}(T)} > u^{-a_{D}^{(j)}} - z^{-1},\quad\quad\frac{\tFD^{(j)}(T)}{\tFD^{(j)}(Tu)} > u^{a_{D}^{(j)}} - z^{-1}.
\end{equation*}
Furthermore, the assumption \eqref{eq:novanishprop} means there exists
$d>0$ such that for all $T$ sufficiently large, there is some
$k':=k'(T)$ such that $\aD^{(k')}=\aD$ and $\l^{(k')}/\l>d$. Hence for
$T$ sufficiently large: 
\begin{align*}
B_{2,j}^{-1}&=\sum_{k:{\a_{D}^{(k)}=\a_D}}({\l^{(k)}}/\l)\frac{\tFD^{(k)}(Tu)}{\tFD^{(j)}(T)}
=\sum_{k:{\a_{D}^{(k)}=\a_D}}({\l^{(k)}}/\l)\frac{\tFD^{(k)}(Tu)}{\tFD^{(j)}(Tu)}\frac{\tFD^{(j)}(Tu)}{\tFD^{(j)}(T)}
> d z (u^{-{\aD}}-z^{-1}).
\end{align*}
This shows that $B_{2,j}^{-1}$ can be made arbitrarily large for $T$ sufficiently large, whence $B_{2,j}\to0$ as $T\to\infty$.

Similarly, consider $C_{j}$, and
\begin{align*}
C_{j}^{-1}&\geq \sum_{k:\aD^{(k)}=\aD}(\l^{(k)}/\l)\frac{\tFD^{(k)}(T)}{\tFD^{(j)}(Tu)}
=
\sum_{k:\aD^{(k)}=\aD}(\l^{(k)}/\l)\frac{\tFD^{(k)}(T)}{\tFD^{(j)}(T)}\frac{\tFD^{(j)}(T)}{\tFD^{(j)}(Tu)}
>d z(u^{{\aD}}-z^{-1}).
\end{align*}
This shows that $C_{j}^{-1}$ can be made arbitrarily large for $T$ sufficiently large, which completes the first part of the Lemma.

For the second limit in \eqref{eq:frv}, recall that $z<b_{D}(\l T)$ iff $1/\tFD(z)<\l T$ for each $T$. For $\e>0$, setting $z=b_{D}(\l T)(1-\e)$ and $z=b_{D}(\l T)(1+\e)$ yields 
\begin{equation*}
\frac{\tFD(b_{D}(\l T)(1+\e))}{\tFD(b_{D}(\l T))}\leq\frac{1}{\l T\tFD(b_{D}(\l T) )}\leq\frac{\tFD(b_{D}(\l T)(1-\e))}{\tFD(b_{D}(\l T))}.
\end{equation*}
Letting $T\to\infty$ and using Lemma \ref{lem:bbounds} and the first limit gives
\begin{equation*}
(1+\e)^{-\aD}\leq\frac{1}{\l T\tFD(b_{D}(\l T) )}\leq(1-\e)^{-\aD}.
\end{equation*}
Because $\e$ is arbitrary, then
\begin{equation*}
\lim_{T\to\infty}\l T\tFD(b_{D}(\l T) )=1.
\end{equation*}
Therefore
\begin{equation*}
\lim_{T\to\infty}\l T\tFD(b_{D}(\l T) u)=\lim_{T\to\infty}\l T\tFD(b_{D}(\l T))\lim_{T\to\infty}\frac{\tFD(b_{D}(\l T)u)}{\tFD(b_{D}(\l T))}=u^{-\aD}.
\end{equation*}
{The final statement about vague convergence
follows the proof} of \citet[][Theorem 3.6]{resnickbook:2007}.
\end{proof}

Even though $\FD$ depends on $T$, {a version} of Potter's bounds holds.

\begin{mylem}\label{lem:fpotter}
Let $\d>0$. Under the assumption \eqref{eq:novanishprop}, there exists $T_{0}=T_{0}(\d)>0$ such that for all $T\geq T_{0}$, $Tu\geq T_{0}$:
\begin{equation*}
\frac{\tFD(Tu)}{\tFD(T)}\leq(1+\d)u^{-\aD}\max\{u^{-\d},u^{\d}\}.
\end{equation*}
\end{mylem}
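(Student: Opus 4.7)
The plan is to exploit the fact that there are only finitely many streams, so the classical Potter bound applies to each fixed $\tFD^{(j)}$ uniformly; the main twist is that the mixture weights $\l^{(j)}/\l$ vary with $T$, which will be absorbed by the non-vanishing assumption \eqref{eq:novanishprop} together with the decomposition
$$\tFD(x) = S_=(x) + S_>(x), \qquad S_=(x) := \sum_{j:\,\aD^{(j)}=\aD}(\l^{(j)}/\l)\tFD^{(j)}(x), \quad S_>(x) := \sum_{j:\,\aD^{(j)}>\aD}(\l^{(j)}/\l)\tFD^{(j)}(x).$$
Fix $\varepsilon\in(0,\delta)$ to be tuned later, and apply the standard Potter bound to each of the $p$ fixed tails $\tFD^{(j)}$: there is $T_1=T_1(\varepsilon)$ such that for $T\geq T_1$ and $Tu\geq T_1$,
$$\tFD^{(j)}(Tu) \leq (1+\varepsilon)\, u^{-\aD^{(j)}}\max\{u^{-\varepsilon}, u^{\varepsilon}\}\, \tFD^{(j)}(T), \qquad j=1,\dots,p.$$

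The case $u\geq 1$ is immediate: since $\aD^{(j)}\geq\aD$ gives $u^{-\aD^{(j)}}\leq u^{-\aD}$, summing over $j$ yields $\tFD(Tu)\leq (1+\varepsilon)u^{-\aD}u^{\varepsilon}\tFD(T)$, which is dominated by $(1+\delta)u^{-\aD}\max\{u^{-\delta},u^\delta\}\tFD(T)$ whenever $\varepsilon\leq\delta$. Notably this half needs no use of \eqref{eq:novanishprop}. The substantive case is $u<1$. For indices with $\aD^{(j)}=\aD$, Potter directly delivers $S_=(Tu)\leq (1+\varepsilon)u^{-\aD-\varepsilon}S_=(T)\leq (1+\varepsilon)u^{-\aD-\varepsilon}\tFD(T)$, as desired.

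For indices with $\aD^{(j)}>\aD$, the term-by-term Potter bound only yields $u^{-\aD^{(j)}-\varepsilon}\gg u^{-\aD-\delta}$, and this is the main obstacle. To overcome it, invoke \eqref{eq:novanishprop}, which supplies a constant $d>0$ and, for all sufficiently large $T$, an index $k_0=k_0(T)$ (one of finitely many possibilities) with $\aD^{(k_0)}=\aD$ and $\l^{(k_0)}/\l>d$, whence $\tFD(T)\geq d\,\tFD^{(k_0)}(T)$. Writing
$$\frac{\tFD^{(j)}(Tu)}{\tFD^{(k_0)}(T)} = \frac{\tFD^{(j)}(Tu)}{\tFD^{(k_0)}(Tu)}\cdot\frac{\tFD^{(k_0)}(Tu)}{\tFD^{(k_0)}(T)},$$
the first factor tends to $0$ as $Tu\to\infty$ because $\aD^{(j)}>\aD^{(k_0)}$ and both tails are regularly varying, while the second is Potter-bounded by $(1+\varepsilon)u^{-\aD-\varepsilon}$. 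Because there are only finitely many pairs $(j,k_0)$, one can enlarge $T_0\geq T_1$ so that the first factor is at most $\eta:=d\delta/(4p)$ uniformly; summing then gives
$$\frac{S_>(Tu)}{\tFD(T)} \leq \frac{p\eta}{d}(1+\varepsilon)u^{-\aD-\varepsilon} = \frac{\delta}{4}(1+\varepsilon)u^{-\aD-\varepsilon}.$$

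Adding the $S_=$ and $S_>$ contributions produces $\tFD(Tu)/\tFD(T)\leq (1+\delta/4)(1+\varepsilon)u^{-\aD-\varepsilon}$, and choosing $\varepsilon$ small enough (e.g.\ $\varepsilon=\delta/4$) makes the constant at most $1+\delta$; since $\varepsilon\leq\delta$, one may replace $u^{-\varepsilon}$ by $u^{-\delta}$ for $u<1$ to match the stated form. The expected difficulty, as flagged, is exactly the $u<1$ regime with $\aD^{(j)}>\aD$; its resolution is to trade the unwanted exponent $u^{-\aD^{(j)}-\varepsilon}$ for the smallness of $\tFD^{(j)}(Tu)/\tFD^{(k_0)}(Tu)$, a trade that is made uniform in $T$ precisely by the non-vanishing proportion hypothesis \eqref{eq:novanishprop}.
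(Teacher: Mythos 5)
Your proof is correct, and it takes a genuinely different---and in fact more complete---route than the paper's. The paper bounds the ratio by writing $\tFD(Tu)/\tFD(T)$ as a weighted average of the individual ratios $\tFD^{(j)}(Tu)/\tFD^{(j)}(T)$ (with weights proportional to $(\l^{(j)}/\l)\tFD^{(j)}(T)$), so that it is at most $\bigvee_{j=1}^{p}\tFD^{(j)}(Tu)/\tFD^{(j)}(T)$, and then applies the classical Potter bound to each fixed tail; the assumption \eqref{eq:novanishprop} is never invoked there. That argument closes with the comparison $u^{-\aD^{(j)}}\leq u^{-\aD}$, which is valid only for $u\geq1$ (it reverses for $u<1$ when $\aD^{(j)}>\aD$), so the paper's proof as written really covers only the $u\geq1$ half---precisely the half you correctly observe requires no hypothesis on the proportions. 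Your treatment of $u<1$, which splits off the lighter-tailed components $S_{>}$ and trades the unwanted exponent $u^{-\aD^{(j)}-\varepsilon}$ for the smallness of $\tFD^{(j)}(x)/\tFD^{(k_0)}(x)$ as $x\to\infty$, with \eqref{eq:novanishprop} supplying $\tFD(T)\geq d\,\tFD^{(k_0)}(T)$ uniformly in $T$, is exactly where that hypothesis enters; without it the bound can genuinely fail, e.g.\ when the weight on the heaviest-tailed stream decays fast enough that a lighter-tailed component dominates $\tFD(T)$. The only cosmetic point is the final constant: $(1+\d/4)^2\leq1+\d$ only for $\d\leq8$, but since the right-hand side of the lemma increases with $\d$ one may assume $\d\leq1$ without loss of generality.
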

\begin{proof}
Observe
\begin{equation*}
\frac{\tFD(Tu)}{\tFD(T)}=\frac{\sum_{j=1}^{p}(\l^{(j)}/\l)\tFD^{(j)}(T)\frac{\tFD^{(j)}(Tu)}{\tFD^{(j)}(T)}}{\sum_{j=1}^{p}(\l^{(j)}/\l)\tFD^{(j)}(T)}\leq \bigvee_{j=1}^{p}\frac{\tFD^{(j)}(Tu)}{\tFD^{(j)}(T)}.
\end{equation*}

By Potter bounds \citep[See e.g.][Theorem 1.5.6]{bingham:goldie:teugels:1987}, for all $j=1,\ldots,p$ there exists $T_{j}=T_{j}(\d)$ such that
\begin{equation*}
\frac{\tFD^{(j)}(Tu)}{\tFD^{(j)}(T)}\leq (1+\d)u^{-\aD^{(j)}}\max\{u^{-\d},u^{\d}\}\leq (1+\d)u^{-\aD}\max\{u^{-\d},u^{\d}\},
\end{equation*}
for $T\geq T_{j}$, $Tu\geq T_{j}$. Therefore, the result holds for $T_{0}=\bigvee_{j=1}^{p}T_{j}$.
\end{proof}

We now study $L_{t}(s,u)$, as defined in \eqref{eq:length}.

\begin{mylem}\label{lem:lengthprop}
The length of the subinterval of $[0,t]$ during which the session $(s,u,r)$ transmits data, namely $L_{t}(s,u)$ in \eqref{eq:length}, satisfies the following properties:
\begin{enumerate}[(i)]
\item \textbf{Scaling property:} For $C>0$,
\begin{equation*}
CL_{t}(s,u)=L_{Ct}(Cs,Cu).
\end{equation*}
\item \textbf{Bounds:}
\begin{equation}
L_{t}(s,u)\leq t\wedge u. \label{eqn:sid4}
\end{equation}
\item \textbf{Integrals:} For $1<\gamma<2$ and nonnegative $t_{1},t_{2}$,
\begin{equation*}
\int_{-\infty}^{\infty}L_{t_{1}}(s,u)ds=ut_{1},
\end{equation*}
and
\begin{align*}
\int_{-\infty}^{\infty}\int_{0}^{\infty}&L_{t_{1}}(s-u,u)1_{[0,t_{2}]}(s)u^{-\gamma}duds\\
&=\frac{1}{(\gamma-1)(2-\gamma)(3-\gamma)}\bigg\{(t_{2}^{3-\gamma}-(t_{2}-t_{1})^{3-\gamma})1_{t_{1}<t_{2}}+t_{2}^{3-\gamma}1_{t_{1}\geq t_{2}}\bigg\}.
\end{align*}
\end{enumerate}
\end{mylem}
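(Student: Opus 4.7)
The plan is to dispatch the three parts in turn; all are elementary and follow directly from the definition $L_t(s,u)=|[0,t]\cap[s,s+u]|=\int_0^t \mathbf{1}_{[s,s+u]}(y)\,dy$. Part~(i) is immediate from the fact that Lebesgue measure scales by $C$ under the map $x\mapsto Cx$: applying this map to $[0,t]\cap[s,s+u]$ yields $[0,Ct]\cap[Cs,Cs+Cu]$, so $CL_t(s,u)=L_{Ct}(Cs,Cu)$. Part~(ii) follows because the intersection is contained in both $[0,t]$ and $[s,s+u]$, giving $L_t(s,u)\leq t$ and $L_t(s,u)\leq u$ simultaneously.

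For the first integral in part~(iii), I would write $L_{t_1}(s,u)=\int_0^{t_1}\mathbf{1}_{[s,s+u]}(y)\,dy$ and apply Fubini to swap $s$ and $y$: the inner $s$-integral of $\mathbf{1}_{[s,s+u]}(y)=\mathbf{1}_{[y-u,y]}(s)$ equals $u$, and the remaining $y$-integral over $[0,t_1]$ gives $ut_1$.

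For the double integral, the key move is the substitution $L_{t_1}(s-u,u)=\int_0^{t_1}\mathbf{1}_{[s-u,s]}(v)\,dv$, which converts the expression into a triple integral over $(s,u,v)$. Swapping the $s$ and $v$ integrals using Fubini, the inner $s$-integral becomes $\int_0^{t_2}\mathbf{1}_{[v,v+u]}(s)\,ds=L_{t_2}(v,u)$ (here $v\geq 0$ is automatic). This reduces the claim to
\begin{equation*}
\int_0^{t_1}\!\int_0^\infty u^{-\gamma}\,L_{t_2}(v,u)\,du\,dv,
\end{equation*}
and for $0\leq v\leq t_2$ one has the closed form $L_{t_2}(v,u)=\min(t_2-v,u)$.

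The last step is a direct computation. Splitting the $u$-integral at $u=t_2-v$ gives
\begin{equation*}
\int_0^{t_2-v}u^{1-\gamma}\,du+(t_2-v)\int_{t_2-v}^\infty u^{-\gamma}\,du=\frac{(t_2-v)^{2-\gamma}}{(2-\gamma)(\gamma-1)},
\end{equation*}
where the condition $1<\gamma<2$ is exactly what makes both tails of the $u$-integral converge. Then integrating $(t_2-v)^{2-\gamma}$ over $v\in[0,t_1\wedge t_2]$ and case-splitting on whether $t_1<t_2$ or $t_1\geq t_2$ yields the two expressions combined by the indicator functions in the stated formula. I expect the main bookkeeping obstacle to be the case split and keeping track of the boundary $v=t_2$, but nothing genuinely subtle arises; the $1<\gamma<2$ constraint is what makes the computation go through.
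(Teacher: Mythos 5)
Your proof is correct, and parts (i), (ii) and the first integral of (iii) coincide with the paper's (one-line) arguments. For the double integral you take a genuinely different route. The paper writes $L_{t_1}(s-u,u)$ explicitly as a piecewise-linear function of $s$ on four regions ($s$, $t_1$, $u$, $t_1-s+u$, corresponding to the basic decomposition in Mikosch et al.) and integrates region by region, which forces an implicit case split on $u\lessgtr t_1$ as well as on $t_1\lessgtr t_2$. You instead represent $L_{t_1}(s-u,u)=\int_0^{t_1}\mathbf{1}_{[s-u,s]}(v)\,dv$, apply Fubini to collapse the $s$-integral into $L_{t_2}(v,u)=\min(t_2-v,u)$, and reduce everything to the one-dimensional computation
\begin{equation*}
\int_0^{t_2-v}u^{1-\gamma}\,du+(t_2-v)\int_{t_2-v}^{\infty}u^{-\gamma}\,du=\frac{(t_2-v)^{2-\gamma}}{(2-\gamma)(\gamma-1)},
\end{equation*}
followed by an integration in $v$ over $[0,t_1\wedge t_2]$ that produces the two cases of the stated formula in one stroke. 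Your version is cleaner: it isolates exactly where the hypothesis $1<\gamma<2$ is used (convergence of the two tails of the $u$-integral) and replaces the paper's four-region bookkeeping with a single $\min$, at the cost of one extra application of Fubini (which is harmless since the integrand is nonnegative). Both yield the same closed form, and I verified your intermediate identities, including $\int_{-\infty}^{\infty}\mathbf{1}_{[v,v+u]}(s)\mathbf{1}_{[0,t_2]}(s)\,ds=L_{t_2}(v,u)$ and the final antiderivative $\bigl(t_2^{3-\gamma}-(t_2-t_1\wedge t_2)^{3-\gamma}\bigr)/(3-\gamma)$.
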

\begin{proof}
The scaling property and the bounds follow directly from \eqref{eq:length}.

Now, the first part of Property (iii) is readily checked by using the first integral in \eqref{eq:length} after reversing the order of integration. Finally, the second part of Property (iii) can be derived by writing
\begin{equation*}
L_{t_{1}}(s-u,u)=\begin{cases}
0,&\textrm{$s<0$ or $s>u+t_{1}$},\\
s,&0\leq s\leq u\wedge t_{1},\\
t_{1},&t_{1}\leq s\leq u,\\
u,&u\leq s\leq t_{1},\\
t_{1}-s+u,&u\vee t_{1}\leq s\leq u+t_{1},
\end{cases}
\end{equation*}
and integrating accordingly. Observe that the four regions in which $L_{t_{1}}(s-u,u)$ is nonzero correspond to those of the basic decomposition in \citet[][Eq. 4.1]{mikosch:resnick:rootzen:stegeman:2002}.

\end{proof}

The next lemma helps obtain approximations to the cumulative input of the
aggregated  streams.

\begin{mylem}\label{lem:chv}
For any $a,T>0$, we have
\begin{align*}
\frac 1a (A(Tt)-\l\mD \mR Tt)&=\frac 1a\int_{-\infty}^{\infty}\int_{0}^{\infty}\int_{0}^{\infty}rL_{Tt}(Ts,u)\cprm{N}(Tds,du,dr)\\
&=\frac {T}{a}\int_{-\infty}^{\infty}\int_{0}^{\infty}\int_{0}^{\infty}rL_{t}(s,u)\cprm{N}(Tds,Tdu,dr)\\
&=\int_{-\infty}^{\infty}\int_{0}^{\infty}\int_{0}^{\infty}rL_{t}(s,u)\cprm{N}(Tds,Tdu, (a/T) dr).
\end{align*}
\end{mylem}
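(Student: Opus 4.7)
The identity is pure bookkeeping: all three lines are obtained from $A(Tt)-\lambda\mu_D\mu_R Tt$ in \eqref{eq:caginp} by a sequence of changes of variables in the triple integral, combined with the scaling property of $L_t(s,u)$ from Lemma \ref{lem:lengthprop}(i). The plan is to apply three successive substitutions $s\mapsto Ts$, $u\mapsto Tu$, and $r\mapsto(a/T)r$ in the integrand, and track how each of them transforms the integrator $\cprm{N}$.

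First, starting from the representation
$$A(Tt)-\lambda\mu_D\mu_R Tt=\int_{-\infty}^{\infty}\int_{0}^{\infty}\int_{0}^{\infty}r\,L_{Tt}(s,u)\,\cprm{N}(ds,du,dr),$$
I would substitute $s\mapsto Ts$. The standard change-of-variable rule for a Poisson random measure under the affine map $s\mapsto s/T$ (and for its compensation) gives $\int f(s)\,\cprm{N}(ds,\cdot,\cdot)=\int f(Ts)\,\cprm{N}(T\,ds,\cdot,\cdot)$, and dividing by $a$ yields the first equality.

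Next, apply the substitution $u\mapsto Tu$ in the same way; the integrand becomes $r\,L_{Tt}(Ts,Tu)$, and Lemma \ref{lem:lengthprop}(i) with $C=T$ gives $L_{Tt}(Ts,Tu)=T\,L_t(s,u)$. Pulling the extra factor of $T$ out of the integral and combining it with the $1/a$ prefactor produces the second line $(T/a)\int\int\int r\,L_t(s,u)\,\cprm{N}(T\,ds,T\,du,dr)$. Finally, substitute $r\mapsto(a/T)r$, so that $r\,L_t(s,u)$ becomes $(a/T)\,r\,L_t(s,u)$ and the integrator becomes $\cprm{N}(T\,ds,T\,du,(a/T)\,dr)$; the multiplicative constant $(a/T)$ cancels the prefactor $T/a$ and yields the last line.

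The only mild technicality is justifying that these substitutions are legal when the integrator is the compensated measure $\cprm{N}$ rather than an ordinary measure. This is not a serious obstacle: the original integrals are well defined by Campbell's theorem, as noted after \eqref{eq:aginp}, and under an invertible affine change of variables a Poisson random measure maps to another Poisson random measure with the transformed mean measure, with the compensation transforming in the same way. Hence each substitution commutes with the stochastic integration, and nothing beyond checking that integrability is preserved under the linear rescalings is required.
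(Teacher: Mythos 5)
Your proposal is correct and follows essentially the same route as the paper, whose entire proof is the one-line observation that the identities follow from changes of variables in \eqref{eq:caginp} together with the scaling property of $L_{t}(s,u)$ from Lemma \ref{lem:lengthprop}. Your spelled-out substitutions $s\mapsto Ts$, $u\mapsto Tu$, $r\mapsto (a/T)r$ and the use of $L_{Tt}(Ts,Tu)=TL_{t}(s,u)$ are exactly the intended computation, and your remark on why the substitutions commute with integration against the compensated measure is a harmless elaboration.
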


\begin{proof}
All the relations here follow from several ways to change variables in \eqref{eq:caginp} and using the scaling property of $L_{t}(s,u)$. See Lemma \ref{lem:lengthprop}.
\end{proof}

Our limit theorems are proved by verifying convergence of finite
dimensional distributions {for various} processes. The following
is required.

\begin{myprop}\label{prop:chf}
For arbitrary $m\geq1$, $0\leq t_{1},\ldots,t_{m}$, and real $z_{1},\ldots,z_{m}$, define
\begin{equation}\label{eq:gfun}
g(s,u,r)=\exp\left\{i\sum_{j=1}^{m}z_{j}rL_{t_{j}}(s,u)\right\}-1-i\sum_{j=1}^{m}z_{j}rL_{t_{j}}(s,u),
\end{equation}
and
\begin{equation}\label{eq:hfun}
h(s,u,r)=i\left(\exp\left\{i\sum_{j=1}^{m}z_{j}ur1_{(0,t_{j})}(s)\right\}-1\right)\sum_{k=1}^{m}z_{k}1_{[0,t_{k}]}(s)ru^{-\aD}.
\end{equation}

\begin{enumerate}[(a)]

\item For any $a,T>0$, the characteristic function of the finite-dimensional distributions \emph{(fidi chf)} of the process $\{(1/a)(A(Tt)-\l\mD \mR Tt);t\geq0\}$ is given by
\begin{align}
\ln E \exp&\left\{i\sum_{j=1}^{m}z_{j}\left[\frac 1a (A(Tt_{j})-\l\mD \mR Tt_{j})\right]\right\}=\int_{-\infty}^{\infty}\int_{0}^{\infty}\int_{0}^{\infty}g(s,u,r)EN(Tds,Tdu,(a/T)dr)\label{eq:csaginpchf1}\\
&=\int_{-\infty}^{\infty}\int_{0}^{\infty}\int_{0}^{\infty}g_{u}(s-u,u,r)\l
T\tFD(Tu)dsdu\FR((a/T)dr).\label{eq:csaginpchf2} 
\end{align}
where $g_{u}$ is the partial derivative of $g$ with respect to $u$.

\item The \fidichf of the limit processes in Corollary \ref{cor:constprop} are given as follows.
\begin{enumerate}[(i)]
\item The \fidichf of the limit process under Scenario $\Fast$ and $E[( R_{1}^{(1)} )^{2}]<\infty$ is given by
\begin{align}\label{eq:limfichf}
\ln E\exp&\left\{i\sum_{j=1}^{m}z_{j}E[( R_{1}^{(1)} )^{2}]^{1/2}\sigma_{B_{H}(1)} B_{H}(t_{j})\right\}\notag\\
&=-\frac 12 E[( R_{1}^{(1)} )^{2}]\sum_{j=1}^{m}\sum_{k=1}^{m}z_{i}z_{j}\sigma_{B_{H}(1)}^{2}\frac{1}{2}\left(t_{i}^{2H}+t_{j}^{2H}-|t_{i}-t_{j}|^{2}\right),
\end{align}
where $B_{H}$ is fractional Brownian motion with
\begin{equation}\label{eq:varbh1}
\sigma_{B_{H}(1)}^{2}=\frac{2}{(\aD-1)(2-\aD)(3-\aD)},
\end{equation}
and $H=(3-\aD)/2$.

\item The \fidichf of the limit process under Scenario $\Fast$ and $\tFR\in RV_{-\aR}, 1<\aR<2,$ is given by
\begin{equation}\label{eq:limfiichf}
\ln E\exp\left\{i\sum_{j=1}^{m}z_{j}Z_{\aD,\aR}(t_{j})\right\}=\int_{-\infty}^{\infty}\int_{0}^{\infty}\int_{0}^{\infty}g(s,u,r)EN^{\infty}_{\aD,\aR}(ds,du,dr).
\end{equation}

\item The \fidichf of the limit process under Scenario $\Moderate$ is given by
\begin{equation}\label{eq:limmchf}
\ln E\exp\left\{i\sum_{j=1}^{m}z_{j}cY_{\aD}(t_{j}/c)\right\}=c^{\aD-1}\int_{-\infty}^{\infty}\int_{0}^{\infty}\int_{0}^{\infty}g(s,u,r)EN^{\infty}_{\aD,\FR}(ds,du,dr).
\end{equation}

\item Finally, the \fidichf of the limit process under Scenario $\Slow$ is given by
\begin{equation}\label{eq:limschf}
\ln E\exp\left\{i\sum_{j=1}^{m}z_{j}E[( R_{1}^{(1)} )^{\aD}]^{1/\aD}\Lambda_{a_{D}}(t_{j})\right\}=\int_{-\infty}^{\infty}\int_{0}^{\infty}\int_{0}^{\infty}h(s,u,r)dsdu\FR(dr).
\end{equation}

\end{enumerate}
\end{enumerate}
\end{myprop}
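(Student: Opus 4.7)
The plan for Proposition \ref{prop:chf} is essentially bookkeeping: part (a) follows from Campbell's formula \citep[Section 3.2]{kingman:1993} for stochastic integrals against compensated Poisson random measures, combined with the scaling property of $L_{t}$ (Lemma \ref{lem:lengthprop}(i)) and an integration by parts in $u$; part (b) treats each limit process using either its Gaussian representation (case (i)) or its representation as a stochastic integral against a compensated PRM (cases (ii)--(iv)).

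For part (a), I would apply Campbell's formula to the integrand $(s,u,r)\mapsto\sum_{j}z_{j}(r/a)L_{Tt_{j}}(s,u)$ against $\cprm{N}$, which yields the log \fidichf as
\[
\int\Big(\exp\{i\textstyle\sum_{j}z_{j}(r/a)L_{Tt_{j}}(s,u)\}-1-i\sum_{j}z_{j}(r/a)L_{Tt_{j}}(s,u)\Big)EN(ds,du,dr).
\]
The change of variables $s\mapsto Ts$, $u\mapsto Tu$, $r\mapsto (a/T)r$ together with $L_{Tt_{j}}(Ts,Tu)=TL_{t_{j}}(s,u)$ convert the integrand into $g(s,u,r)$ and the mean measure into $EN(Tds,Tdu,(a/T)dr)$, proving \eqref{eq:csaginpchf1}. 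To obtain \eqref{eq:csaginpchf2}, I would replace $g(s,u,r)$ by $g(s-u,u,r)$ in the integrand (harmless since the $ds$-integration runs over all of $\R$), then integrate by parts in $u$ using $\FD(Tdu)=-d\tFD(Tu)$. The boundary term at $u=0$ vanishes because $L_{t_{j}}(s,0)=0$ forces $g(s,0,r)=0$, and the boundary term at $u=\infty$ vanishes because $\tFD(Tu)\to 0$ while $g$ grows at worst linearly. Taking the total $u$-derivative of $g(s-u,u,r)$ produces two pieces: a partial-in-the-first-argument piece whose $s$-integral vanishes by the fundamental theorem of calculus (since $L_{t_{j}}(\cdot,u)$ is compactly supported in its first argument), and $g_{u}(s-u,u,r)$ as defined in \eqref{eq:gufun}.

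For part (b), case (i) follows from $B_{H}$ being a centered Gaussian process with covariance $\tfrac{1}{2}(s^{2H}+t^{2H}-|s-t|^{2H})$, so its log \fidichf is a quadratic form; the value of $\sigma^{2}_{B_{H}(1)}$ in \eqref{eq:varbh1} comes from matching with the $\gamma=\aD$, $2H=3-\aD$ instance of Lemma \ref{lem:lengthprop}(iii). Cases (ii) and (iii) are direct applications of Campbell's formula to the defining compensated-PRM integrals, using the scaling $L_{t/c}(s,u)=c^{-1}L_{t}(cs,cu)$ together with the substitution $(s,u)\mapsto(cs,cu)$ in case (iii) to expose the factor $c^{\aD-1}$ in the rescaled mean measure. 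Case (iv) starts from Campbell's formula applied to $\cprm{N^{\infty}_{\aD,\FR}}$, producing
\[
\int\Big(\exp\{iur\textstyle\sum_{j}z_{j}1_{(0,t_{j})}(s)\}-1-iur\sum_{j}z_{j}1_{(0,t_{j})}(s)\Big)ds\,\aD u^{-(\aD+1)}du\,\FR(dr),
\]
and then integrating by parts in $u$ via $\aD u^{-(\aD+1)}du=-d(u^{-\aD})$, whose boundary terms vanish thanks to $\aD>1$ and the growth control on the integrand; this produces $h(s,u,r)$ and yields \eqref{eq:limschf}.

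The main technical obstacle is integrability: Campbell's formula and each integration by parts require controlling $g$ (and its $u$-derivative) near $u=0$ and $u=\infty$. These estimates are furnished by Lemmas \ref{lem:lengthprop} and \ref{lem:ebounds}, which give $|g(s,u,r)|\leq\text{const}\cdot r^{1+\zeta}(t_{j}\wedge u)^{\zeta}$ for any $0\leq\zeta\leq 1$; combined with the tail assumptions on $\FR$ (either $\tFR\in RV_{-\aR}$ with $\aR>\aD$ or $E[(R_{1}^{(1)})^{2}]<\infty$) and the Potter-type bound on $\tFD$ from Lemma \ref{lem:fpotter}, these suffice for integrability against all the mean measures that appear and for the vanishing of every boundary term. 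Beyond these checks, the proof is routine and the scaling property of $L_{t}$ does essentially all of the geometric work.
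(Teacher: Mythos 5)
Your overall route is the same as the paper's: the exponential formula for integrals against compensated Poisson random measures (the paper cites \citet{rosinski:rajput:1989} rather than Campbell's formula, but it is the same identity), the scaling property of $L_t$ via Lemma \ref{lem:chv}, integration by parts in $u$ with the shift $s\mapsto s+u$ for \eqref{eq:csaginpchf2} and \eqref{eq:limschf}, the Gaussian covariance computation for (i), and the substitution $(s,u)\mapsto (cs,cu)$ to expose $c^{\aD-1}$ in (iii). So structurally the proposal matches.

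The one place you are too quick is the integrability verification, which is in fact the bulk of the paper's proof. The bound you quote, $|g(s,u,r)|\leq \mathrm{const}\cdot r^{1+\zeta}(t_j\wedge u)^{\zeta}$, is not what Lemmas \ref{lem:lengthprop} and \ref{lem:ebounds} give for $g$ itself (the estimate $|e^{ix}-1-ix|\leq d_\zeta|x|^{1+\zeta}$ yields $(t_j\wedge u)^{1+\zeta}$, not $(t_j\wedge u)^{\zeta}$), and, more importantly, any bound that is independent of $s$ cannot be integrated $ds$ over all of $\R$. You must keep track of the $s$-support of $L_{t_j}(s,u)$ (an interval of length about $u+t_j$), which is exactly what the explicit integrals in Lemma \ref{lem:lengthprop}(iii) encode. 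The paper handles this by directly verifying $\int\bigl(f^2\wedge|f|\bigr)E\xi<\infty$ for $f=\sum_j z_j rL_{t_j}(s,u)$, splitting the domain into $\{ur<1\}$ and $\{ur>1\}$ and using the identities $\int L_{t}(s,u)\,ds=ut$ and the two-fold integral of Lemma \ref{lem:lengthprop}(iii); this is where the restrictions $1<\aD<\aR<2$ (resp.\ the moment condition $E[(R_1^{(1)})^{\aD}]<\infty$) actually enter. Your outline identifies the right ingredients but does not carry out this computation, and the pointwise estimate you propose in its place would not close the argument as stated.
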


\begin{proof}

Given {\eqref{eq:csaginpchf1}},  \eqref{eq:csaginpchf2} is readily derived using
 integration by parts and the change of variables $s\mapsto s+u$. Moreover, \eqref{eq:limfichf} follows from the fact that $B_{H}$ is fractional Brownian motion.  The remaining parts are a consequence of the following property of Poisson random measures \citep[See e.g.][]{rosinski:rajput:1989}:
\begin{equation*}
\ln E\exp\left\{i\int f(x)\cprm{\xi}(dx)\right\}=\int \left(e^{if(x)}-1-if(x)\right)E\xi(dx),
\end{equation*}
if
\begin{equation*}
\int \left(f^{2}(x)\wedge |f(x)|\right)E\xi(dx)<\infty.
\end{equation*}

For now, let us focus on \eqref{eq:csaginpchf1}, \eqref{eq:limfiichf} and \eqref{eq:limmchf}. By Lemma \ref{lem:chv}, the exponent in the left side of \eqref{eq:csaginpchf1} and \eqref{eq:limfiichf} is of the form
\begin{equation}\label{eq:exponentchv}
i\int_{-\infty}^{\infty}\int_{0}^{\infty}\int_{0}^{\infty}\sum_{j=1}^{m}z_{j}rL_{t_{j}}(s,u)\cprm{\xi}(ds,du,dr),
\end{equation}
for a $\PRM$ $\xi$, while the exponent in the left side of \eqref{eq:limmchf} is $c^{\aD-1}$ times \eqref{eq:exponentchv}, using Lemma \ref{lem:lengthprop} and the change of variables $s\mapsto s/c,u\mapsto u/c$. Thus, it suffices to check that
\begin{equation}\label{eq:chfcondition}
\int_{-\infty}^{\infty}\int_{0}^{\infty}\int_{0}^{\infty}\left(\sum_{j=1}^{m}z_{j}rL_{t_{j}}(s,u)\right)^{2}\bigwedge\left|\sum_{k=1}^{m}z_{k}rL_{t_{k}}(s,u)\right|E\xi(ds,du,dr)<\infty.
\end{equation}

Bounds and integral results for $L_{t}(s,u)$ in Lemma \ref{lem:lengthprop} (ii) and (iii) needed.

First observe that
\begin{equation*}
\int_{-\infty}^{\infty}\int_{0}^{\infty}\int_{0}^{\infty}\left|\sum_{j=1}^{m}z_{j}rL_{t_{j}}(s,u)\right|EN(Tds,Tdu,(a/T)dr)\leq\frac Ta\sum_{j=1}^{m}|z_{j}|\l \mD \mR t_{j},
\end{equation*}
which proves \eqref{eq:csaginpchf1}.

In order to prove \eqref{eq:limfiichf}, split the corresponding integral \eqref{eq:chfcondition} into two parts $I_{(<)}$ and $I_{(>)}$, according to the two domains of integration $D_{(<)}=\{ur<1\}$ and $D_{(>)}=\{ur>1\}$. This yields
\begin{align*}
I_{(<)}&\leq\sum_{j=1}^{m}\sum_{k=1}^{m}|z_{j}z_{k}|\int_{-\infty}^{\infty}\int_{0}^{\infty}\int_{0}^{1/u}r^{2}L_{t_{j}}(s,u)L_{t_{k}}(s,u)EN^{\infty}_{\aD,\aR}(ds,du,dr)\\
&\leq\sum_{j=1}^{m}\sum_{k=1}^{m}\frac{|z_{j}z_{k}|t_{j}\aD\aR}{2-\aR}\left(\frac{1}{\aR-\aD}+\frac{t_{k}}{1-\aR+\aD}\right),
\end{align*}
and
\begin{align*}
I_{(>)}&\leq\sum_{j=1}^{m}|z_{j}|\int_{-\infty}^{\infty}\int_{0}^{\infty}\int_{1\vee u^{-1}}^{\infty}rL_{t_{j}}(s,u)EN^{\infty}_{\aD,\aR}(ds,du,dr)\\ 
&+\sum_{j=1}^{m}\sum_{k=1}^{m}|z_{j}z_{k}|\int_{-\infty}^{\infty}\int_{0}^{\infty}\int_{u^{-1}}^{1\vee u^{-1}}r^{2}L_{t_{j}}(s,u)L_{t_{k}}(s,u)EN^{\infty}_{\aD,\aR}(ds,du,dr)\\
&\leq\sum_{j=1}^{m}\frac{|z_{j}|t_{j}\aD\aR}{\aR-1}\left(\frac{1}{\aR-\aD}+\frac{1}{\aD}\right)+\sum_{j=1}^{m}\sum_{k=1}^{m}\frac{|z_{j}z_{k}t_{j}t_{k}|}{(\aD-1)(2-\aR)},
\end{align*}
whence \eqref{eq:limfiichf} holds.

Similarily, split the integral \eqref{eq:chfcondition} corresponding to the process \eqref{eq:limmchf} into two parts $J_{(<)}$ and $J_{(>)}$, according to the two domains of integration $D_{(<)}$ and $D_{(>)}$, which yields
\begin{align*}
J_{(<)}&\leq\sum_{j=1}^{m}\sum_{k=1}^{m}|z_{j}z_{k}|\int_{-\infty}^{\infty}\int_{0}^{\infty}\int_{0}^{r^{-1}}r^{2}L_{t_{j}}(s,u)L_{t_{k}}(s,u)EN^{\infty}_{\aD,\FR}(ds,du,dr)\\
&\leq\sum_{j=1}^{m}\sum_{k=1}^{m}\frac{|z_{j}z_{k}|t_{j}\aD}{2-\aD}E[( R_{1}^{(1)} )^{\aD}],
\end{align*}
and
\begin{align*}
J_{(>)}&\leq\sum_{j=1}^{m}|z_{j}|\int_{-\infty}^{\infty}\int_{0}^{\infty}\int_{r^{-1}}^{\infty}rL_{t_{j}}(s,u)EN^{\infty}_{\aD,\FR}(ds,du,dr)\\
&\leq\sum_{j=1}^{m}\frac{|z_{j}|t_{j}\aD}{\aD-1}E[( R_{1}^{(1)} )^{\aD}].
\end{align*}
This proves \eqref{eq:limmchf}.

Finally, the exponent in the left side of \eqref{eq:limschf} is
\begin{equation*}
i\int_{-\infty}^{\infty}\int_{0}^{\infty}\int_{0}^{\infty}\sum_{j=1}^{m}z_{j}ur1_{[0,t_{j}]}(s)\cprm{N^{\infty}_{\aD,\FR}}(ds,du,dr).
\end{equation*}
Analogous to the proof of \eqref{eq:limmchf}, it is readily shown that
\begin{equation*}
\int_{-\infty}^{\infty}\int_{0}^{\infty}\int_{0}^{\infty}\left(\sum_{j=1}^{m}z_{j}ur1_{[0,t_{j}]}(s)\right)^{2}\bigwedge\left|\sum_{k=1}^{m}z_{k}ur1_{[0,t_{j}]}(s)\right|EN^{\infty}_{\aD,\FR}(ds,du,dr)<\infty,
\end{equation*}
whence the left side of \eqref{eq:limschf} is equal to
\begin{equation*}
\int_{-\infty}^{\infty}\int_{0}^{\infty}\int_{0}^{\infty}\left(\exp\bigg\{i\sum_{j=1}^{m}z_{j}ur1_{[0,t_{j}]}(s)\bigg\}-1-\sum_{j=1}^{k}z_{j}ur1_{[0,t_{j}]}(s)\right)EN^{\infty}_{\aD,\FR}(ds,du,dr).
\end{equation*}
The result now follows after an integration by parts in the variable $u$.
\end{proof}

Finally, the following result is used to get upper bounds for some integrands throughout the proof of Theorem \ref{thm:novanishprop}.

\begin{mylem}\label{lem:ebounds}
For $0\leq\z\leq1$ and $x\in\R$:
\begin{equation}
|e^{ix}-1|\leq2^{1-\z}|x|^{\z},\label{eqn:sid1}
\end{equation}
\begin{equation}
|e^{ix}-1-ix|\leq d_{\z}|x|^{\z+1},\label{eqn:sid2}
\end{equation}
where $d_{\z}>0$, and for real numbers $x_{1},\ldots,x_{m}$:
\begin{equation}
\bigg(\sum_{j=1}^{m}|x_{j}|\bigg)^{\z}\leq\sum_{j=1}^{m}|x_{j}|^{\z}.\label{eqn:sid3}
\end{equation}
\end{mylem}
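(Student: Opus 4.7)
The plan is to prove the three bounds in sequence; each is an elementary interpolation or convexity argument, and the main task is just to organize them cleanly.

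For \eqref{eqn:sid1}, I would start from the two standard endpoint bounds $|e^{ix}-1|\le 2$ and $|e^{ix}-1|\le |x|$ (the latter from the mean value theorem applied to $t\mapsto e^{itx}$ on $[0,1]$, since the derivative has modulus $|x|$). Interpolating, write
\[
|e^{ix}-1| \;=\; |e^{ix}-1|^{\zeta}\,|e^{ix}-1|^{1-\zeta} \;\le\; |x|^{\zeta}\cdot 2^{1-\zeta},
\]
which gives the claim for $0\le\zeta\le 1$.

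For \eqref{eqn:sid2}, I would again combine two endpoint bounds. The Taylor expansion gives $|e^{ix}-1-ix|\le x^{2}/2$, while the triangle inequality combined with \eqref{eqn:sid1} (at $\zeta=1$) yields $|e^{ix}-1-ix|\le |e^{ix}-1|+|x|\le 2|x|$. Interpolating between the $|x|^{2}$ and $|x|$ bounds,
\[
|e^{ix}-1-ix| \;\le\; (x^{2}/2)^{\zeta}(2|x|)^{1-\zeta} \;=\; 2^{1-2\zeta}|x|^{1+\zeta},
\]
so the claim holds with $d_{\zeta}=2^{1-2\zeta}$ (or any convenient constant).

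For \eqref{eqn:sid3}, I would first show the two-term version $(a+b)^{\zeta}\le a^{\zeta}+b^{\zeta}$ for $a,b\ge 0$ and $0\le\zeta\le 1$. Writing $\varphi(t)=(1+t)^{\zeta}-1-t^{\zeta}$ on $[0,\infty)$, one has $\varphi(0)=0$ and $\varphi'(t)=\zeta[(1+t)^{\zeta-1}-t^{\zeta-1}]\le 0$ since $\zeta-1\le 0$ makes $s\mapsto s^{\zeta-1}$ decreasing; hence $\varphi\le 0$, giving $(1+t)^{\zeta}\le 1+t^{\zeta}$, and by homogeneity $(a+b)^{\zeta}\le a^{\zeta}+b^{\zeta}$. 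Induction on $m$ then yields $\bigl(\sum_{j=1}^{m}|x_{j}|\bigr)^{\zeta}\le \sum_{j=1}^{m}|x_{j}|^{\zeta}$.

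There is no real obstacle here; the inequalities are standard and the only point to watch is making the endpoint choices in the two interpolations so that the resulting constants (especially in \eqref{eqn:sid2}) are stated in a form usable by the proof of Theorem \ref{thm:novanishprop}, where \eqref{eq:ebound} is invoked with a generic multiplicative factor $2$.
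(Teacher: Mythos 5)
Your proof is correct and follows essentially the same route as the paper: both obtain \eqref{eqn:sid1} and \eqref{eqn:sid2} by interpolating between the two endpoint bounds (your multiplicative splitting $A=A^{\zeta}A^{1-\zeta}$ is just the exponentiated form of the paper's linear-in-$\zeta$ argument on logarithms), and both reduce \eqref{eqn:sid3} to the two-term subadditivity of $t\mapsto t^{\zeta}$ followed by induction, the only cosmetic difference being that you verify subadditivity by a derivative sign rather than by Bernoulli's inequality.
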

\begin{proof}
Without loss of generality, fix $x\not=0$. Define $f:[0,1]\to\R$, $f(\z)=(1-\z)\ln 2 +\z\ln |x|$. We can readily check that
\begin{equation*}
\ln|e^{ix}-1|\leq f(\z),\quad \z=0,1,
\end{equation*}
by taking logarithms in both sides of
$|e^{ix}-1|\leq2\wedge|x|$. Since $f(\z)$ is linear in $\z$, $f(\z)$
is either nondecreasing or nonincreasing on $[0,1]$. Hence,
{\eqref{eqn:sid1}}
 holds. Using a similar strategy, we can prove {\eqref{eqn:sid2}}.

For {\eqref{eqn:sid3}},
 assume without loss of generality that $0<|x_{1}|\leq|x_{2}|$, thus $0<|x_{1}/x_{2}|\leq 1$. By Bernoulli's inequality \citep[see e.g.][p. 36]{mitrinovic:vasic:1970}:
\begin{equation*}
(1+|x_{1}/x_{2}|)^{\z}\leq1+\z |x_{1}/x_{2}|\leq 1+ |x_{1}/x_{2}|^{\z}.
\end{equation*}
Multiplying both sides by $|x_{2}|^{\z}$ proves 
{\eqref{eqn:sid3}}
for $m=2$ {and the proof for general $m$ follows by induction. }
\end{proof}

\section{Acknowledgements}

S. I. Resnick was partially supported by ARO Contract W911NF-10-1-0289
at Cornell University.

Dan Eckstrom and Ed Kiefer (CIT-Network Communication Services, Cornell University) and Eric Johnson (ORIE, Cornell) were very helpful with arrangements and collection of the Cornell flow data.

\bibliographystyle{spbasic}
\bibliography{luisloref}
\end{document}